\newcommand{\bDK}{$\bD{}\,$-Klee}
\newcommand{\fDK}{$\fD{}\,$-Klee}
\newcommand{\nnn}{\ensuremath{{n\in{\mathbb N}}}}
\newcommand{\thalb}{\ensuremath{\tfrac{1}{2}}}
\newcommand{\menge}[2]{\big\{{#1}~\big |~{#2}\big\}}
\newcommand{\To}{\ensuremath{\rightrightarrows}}
\newcommand{\fenv}[1]%
{\ensuremath{\,\overrightarrow{\operatorname{env}}_{#1}}}
\newcommand{\benv}[1]%
{\ensuremath{\,\overleftarrow{\operatorname{env}}_{#1}}}
\newcommand{\emp}{\ensuremath{\varnothing}}
\newcommand{\scal}[2]{\left\langle{#1},{#2}  \right\rangle}
\newcommand{\RR}{\ensuremath{\mathbb R}}
\newcommand{\RPX}{\ensuremath{\left[0,+\infty\right]}}
\newcommand{\RX}{\ensuremath{\,\left]-\infty,+\infty\right]}}
\newcommand{\oldIDD}{\ensuremath{\operatorname{int}\operatorname{dom}f}}
\newcommand{\IDD}{\ensuremath{U}}
\newcommand{\dom}{\ensuremath{\operatorname{dom}}}
\newcommand{\argmax}{\ensuremath{\operatorname*{argmax}}}
\newcommand{\intdom}{\ensuremath{\operatorname{int}\operatorname{dom}}\,}
\newcommand{\inte}{\ensuremath{\operatorname{int}}}
\newcommand{\deriv}{\ensuremath{\operatorname{\; d}}}
\newcommand{\rockderiv}{\ensuremath{\operatorname{\; \hat{d}}}}
\newcommand{\closu}{\ensuremath{\operatorname{cl}}}
\newcommand{\ran}{\ensuremath{\operatorname{ran}}}
\newcommand{\conv}{\ensuremath{\operatorname{conv}}}
\newcommand{\cconv}{\ensuremath{\overline{\operatorname{conv}}}}
\newcommand{\Id}{\ensuremath{\operatorname{Id}}}
\newcommand{\Hess}{\ensuremath{\nabla^2\!}}
\newcommand{\ffproj}[1]{\overrightarrow{Q\thinspace}_%
{\negthinspace\negthinspace #1}}
\newcommand{\bfproj}[1]{\overleftarrow{\thinspace Q\thinspace}_%
{\negthinspace\negthinspace #1}}
\newcommand{\fD}[1]{\overrightarrow{D\thinspace}_%
{\negthinspace\negthinspace #1}}
\newcommand{\ffD}[1]{\overrightarrow{F\thinspace}_%
{\negthinspace\negthinspace #1}}
\newcommand{\bD}[1]{\overleftarrow{\thinspace D\thinspace}_%
{\negthinspace\negthinspace #1}}
\newcommand{\bfD}[1]{\overleftarrow{\thinspace F\thinspace}_%
{\negthinspace\negthinspace #1}}
\newcommand{\pinf}{\ensuremath{+\infty}}
\newtheorem{theorem}{Theorem}[section]
\newtheorem{lemma}[theorem]{Lemma}
\newtheorem{corollary}[theorem]{Corollary}
\newtheorem{proposition}[theorem]{Proposition}
\newtheorem{definition}[theorem]{Definition}
\theoremstyle{plain}{\theorembodyfont{\rmfamily}
}
\theoremstyle{plain}{\theorembodyfont{\rmfamily}
}
\theoremstyle{plain}{\theorembodyfont{\rmfamily}
}
\theoremstyle{plain}{\theorembodyfont{\rmfamily}
\newtheorem{example}[theorem]{Example}}
\newtheorem{fact}[theorem]{Fact}
\theoremstyle{plain}{\theorembodyfont{\rmfamily}
\newtheorem{remark}[theorem]{Remark}}
\begin{document}

\title{\sffamily Bregman distances and Klee sets}

\author{Heinz H.\ Bauschke\thanks{Mathematics, Irving K.\ Barber School,
The University of British Columbia Okanagan, Kelowna,
B.C. V1V 1V7, Canada. Email:
\texttt{heinz.bauschke@ubc.ca}.},~
Xianfu
Wang\thanks{Mathematics, Irving K.\ Barber School,
The University of British Columbia Okanagan, Kelowna,
B.C. V1V 1V7, Canada.
Email:
\texttt{shawn.wang@ubc.ca}.},~
Jane Ye\thanks{Department of Mathematics and Statistics, University of
Victoria, Victoria, B.C. V8P 5C2, Canada.
Email:~\texttt{janeye@math.uvic.ca}.},~
and~Xiaoming Yuan\thanks{Department of Management Science,
Antai College of Economics and Management,
Shanghai Jiao Tong University, Shanghai, 200052, China.
Email: \texttt{xmyuan@sjtu.edu.cn}.}}

\date{February 16, 2008}

\maketitle

\vskip 8mm

\begin{abstract} \noindent
In 1960, Klee showed that
a subset of a Euclidean space
must be a singleton provided that each point in the space has a unique
farthest point in the set.
This classical result has received much attention; in fact, the Hilbert
space version is a famous open problem.
In this paper, we consider Klee sets from a new perspective. Rather than
measuring distance induced by a norm, we focus on the case when distance
is meant in the sense of Bregman, i.e., induced by a convex function.
When the convex function has sufficiently nice properties, then ---
analogously to the Euclidean distance case --- every Klee set must be a
singleton. We provide two proofs of this result, based on Monotone Operator
Theory and on Nonsmooth Analysis.
The latter approach leads to results that complement
work by Hiriart-Urruty on the Euclidean case.
\end{abstract}

{\small
\noindent
{\bfseries 2000 Mathematics Subject Classification:}
Primary 47H05;
Secondary 41A65, 49J52.

\noindent {\bfseries Keywords:} Convex function,
Legendre function, Bregman distance,
Bregman projection, farthest point, maximal monotone operator,
subdifferential operator. }

\section{Introduction}
Throughout this paper, $\RR^J$ denotes
the standard Euclidean space with inner
product $\scal{\cdot}{\cdot}$ and induced norm $\|\cdot\|$.
Let $C$ be a nonempty bounded closed subset of $\RR^J$ and assume
that $C$ is a \emph{Klee set (with respect to the Euclidean distance)}, i.e.,
each point in $\RR^J$ has a unique farthest point in $C$.
Must $C$ be a singleton?
The \emph{farthest-point conjecture} \cite{urruty2}
proclaims an affirmative answer to this question.
This conjecture has attracted many mathematicians; see, e.g.,
\cite{lewis,deutsch,urruty2,urruty3,urruty1,west}
and the references therein.
Although the farthest-point conjecture is true in $\RR^J$,
as was shown originally by Klee \cite{klee}
(see also \cite{Edgar1,urruty2,MSV}),
only partial results are known
in infinite-dimensional settings
(see, e.g., \cite{panda,west}).

In this paper, we cast a new light on this problem by measuring
the distance in the sense of Bregman rather than in the usual Euclidean
sense. To this end, assume that
\begin{equation} \label{eq:preD}
\text{$f \colon \RR^J\to\RX$ is convex and differentiable on $U :=
\oldIDD \neq \emp$,}
\end{equation}
where $\intdom f$ stands for the interior of the set
$\dom f:= \menge{x\in\RR^J}{f(x)\in\RR}$.
Then the \emph{Bregman distance} \cite{Bregman} with respect to $f$, written $D_f$ or
simply $D$, is
\begin{equation} \label{eq:D}
D\colon \RR^J \times \RR^J \to \RPX \colon (x,y) \mapsto
\begin{cases}
f(x)-f(y)-\scal{\nabla f(y)}{x-y}, &\text{if}\;\;y\in\IDD;\\
\pinf, & \text{otherwise}.
\end{cases}
\end{equation}
Although standard, it is well known
that the name ``Bregman distance'' is somewhat misleading
because in general $D$ is neither symmetric nor does the triangle
inequality hold.
We recommend the books \cite{ButIus,CenZen} to the reader for
further information on
Bregman distances and their various applications.

Throughout, we assume that
\begin{equation}
C\subset U.
\end{equation}
Now define
the \emph{left Bregman farthest-distance function} by
\begin{equation} \label{e:montag:a}
\bfD{C}\: \colon U\to \RPX \colon  y\mapsto \sup_{x\in C}D(x,y),
\end{equation}
and the corresponding \emph{left Bregman farthest-point map} by
\begin{equation} \label{e:montag:b}
\bfproj{C} \colon \IDD \to \IDD \colon
y\mapsto \underset{x\in C}{\operatorname{argmax}}\;\:D(x,y).
\end{equation}
Since $D$ is in general not symmetric, there exist analogously
the \emph{right Bregman farthest-distance function} and
the \emph{right Bregman farthest-point map}.  These objects,
which we will study later,
are denoted by $\ffD{C}$ and $\ffproj{C}$, respectively.
When $f = \thalb\|\cdot\|^2$, then $D\colon (x,y)\mapsto
\thalb\|x-y\|^2$ is symmetric and the corresponding map $\bfproj{C}$
is identical to the farthest-point map with respect to the Euclidean
distance.

The present more general framework based on Bregman distances allows
for significant extensions of Hiriart-Urruty's work \cite{urruty2}
(and for variants of some of the results in \cite{west}).
One of our main result states that if $f$ is sufficiently nice,
then every Klee set (with respect to $D$) must be a singleton.
Two fairly distinct proofs of this result are given.
The first is based on the deep Br\'ezis-Haraux range approximation theorem
from monotone operator theory.
The second proof, which  uses
generalized subdifferentials from nonsmooth analysis,
allows us to characterize sets with unique farthest points.
Various subdifferentiability properties  of the Bregman farthest-distance
function are also provided.
The present work complements a corresponding study on Chebyshev sets
\cite{BWYY1}, where
the focus is on \emph{nearest} rather than farthest points.

The paper is organized as follows.
In Section~\ref{assumption}, we state our standing assumptions and we
provide some concrete examples for $f$.
In Section~\ref{Lipschitz}, Bregman farthest points are characterized
and it is shown that the Bregman farthest-distance function
is locally Lipschitz.
The first proof of our main result is presented in Section~\ref{monotone}.
In Section~\ref{subdiff}, we study subdifferentiabilities of
farthest-distance function.
We establish Clarke regularity, and we provide
an explicit formula for the Clarke subdifferential.
Section~\ref{completechris} contains several  characterizations
of Klee sets. The results extend
Hiriart-Urruty's work \cite{urruty2} from Euclidean to Bregman distances.
In the final  Section~\ref{finalright}, we show that
the right Bregman farthest-point map $\ffproj{C}^f$ can be studied in terms
of the left and dual counterpart $\bfproj{\nabla f(C)}^{f^*}$.
When $f$ is sufficiently nice, this allows us to deduce that
Klee sets with respect to the right Bregman farthest-point map are
necessarily singletons.

We employ standard notation from Convex Analysis;
see, e.g., \cite{Rock70,Rock98,Zali}.
For a function $h$, the subdifferential in the convex-analytical
sense is denoted by $\partial h$, $h^*$ stands for the Fenchel
conjugate, and $\dom h$ is the set of all points where $h$ is not $\pinf$.
If $h$ is differentiable at $x$, then
$\nabla h(x)$ and $\nabla^2 h(x)$ denotes the gradient vector
and the Hessian matrix at $x$, respectively.
The notation $\conv h$ ($\cconv h$) denotes
the convex hull (closed convex hull) of $h$.
For a set $S$, the
expressions $\inte S$, $\closu S$, $\conv S$, $\cconv S$
signify the interior,
the closure, the convex hull, and the closed convex hull of $S$,
respectively.
A set-valued operator $T$ from $X$ to $Y$, is written as
$T\colon X\To Y$, and $\dom T$ and $\ran T$ stand for the
domain and range of $T$.
Finally, we simply write $\varliminf$ and $\varlimsup$,
for the limit inferior and limit superior (as they occur in
and in set-valued analysis).

\section{Standing Assumptions and Examples}\label{assumption}

From now on,
and until the end of Section~\ref{completechris},
our standing assumptions are:
\begin{itemize}
\item[\bfseries A1]
The function $f\colon\RR^J\to\RX$ is a \emph{convex function of Legendre type},
i.e., $f$ is essentially smooth and essentially strictly convex in the sense
of Rockafellar \cite[Section~26]{Rock70}, with $U := \intdom f$.
\item[\bfseries A2]
The function $f$ is \emph{$1$-coercive} (also known as
\emph{supercoercive}), i.e., $\textstyle \lim_{\|x\|\rightarrow
+\infty}f(x)/\|x\|=+\infty$. An equivalent requirement is
$\dom f^*=\RR^J$ (see \cite[Theorem~11.8(d)]{Rock98}).
\item[\bfseries A3] The set $C$ is a nonempty bounded closed
(hence compact) subset of $U$.
\end{itemize}

There are many instances of functions satisfying \textbf{A1}--\textbf{A3}.
We list only a few.

\begin{example} \label{ex:dienstag}
Let $x=(x_j)_{1\leq j\leq J}$ and $y=(y_j)_{1\leq j\leq J}$ be two
points in $\RR^J$.
\begin{enumerate}
\item \label{ex:dienstag:energy}
\emph{Energy:}
If $f=\thalb\|\cdot\|^2$, then $U=\RR^J$, $f^*=f$, and
\begin{equation*}
D(x,y) = \thalb\|x-y\|^2.
\end{equation*}
Thus, the Bregman distance with respect to the energy corresponds to the
usual Euclidean distance.
\item  \label{ex:examples:KL}
\emph{Boltzmann-Shannon Entropy:} If $f\colon
x\mapsto\sum_{j=1}^{J}x_j\ln(x_j)-x_j$ if $x\geq 0$, $+\infty$
otherwise (where $x\geq 0$ and $x>0$ is understood coordinate-wise and
$0\ln0 := 0$), then
$U=\menge{x\in \RR^J}{x>0}$,
$f^*(y)=\sum_{j=1}^J\exp(y_j)$, and
\begin{equation*}
D(x,y) = \begin{cases}
\textstyle \sum_{j=1}^J x_j \ln(x_j/y_j) -
x_j + y_j, &
\text{if $x \geq 0$ and $y>0$;}\\
\pinf, & \text{otherwise}
\end{cases}
\end{equation*}
is the famous \emph{Kullback-Leibler Divergence}.
\item More generally, given a function $\phi\colon\RR\to\RX$
satisfying \textbf{A1}--\textbf{A2} and setting
$f(x)=\sum_{j=1}^{J}\phi(x_j)$, one has the same properties for $f$,
with $U=(\inte\dom\phi)^{J}$ and
\begin{equation*}
D(x,y)=\sum_{j=1}^{J}\phi(x_j)-\phi(y_{j})-\phi'(y_j)(x_j-y_j).
\end{equation*}
For instance, one may consider $\phi\colon t\mapsto |t|^p/p$, where $p>1$.
\end{enumerate}
\end{example}

The following result recalls a key property of Legendre functions.

\begin{fact}[Rockafellar] \emph{\cite[Theorem~26.5]{Rock70}} \label{isom}
If $h$ is a convex function of Legendre type, then so is $h^*$ and
\begin{equation*}
\nabla h:\intdom h \to \intdom h^*
\end{equation*}
is a topological isomorphism with inverse mapping
$(\nabla h)^{-1}=\nabla h^*$.
\end{fact}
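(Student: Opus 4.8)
The plan is to reduce everything to the behaviour of the subdifferential operator $\partial h$ and its inversion under conjugation. First I would recall the two defining properties in operator-theoretic form: essential smoothness means that $h$ is differentiable throughout $\intdom h$ and that $\|\nabla h(x_k)\|\to\pinf$ whenever $x_k$ approaches a boundary point of $\dom h$, which is equivalent to $\partial h$ being single-valued with $\dom\partial h=\intdom h$; essential strict convexity means that $h$ is strictly convex on convex subsets of $\dom\partial h$, which is equivalent to $\partial h$ being injective. The cornerstone I would invoke is the conjugacy duality \cite[Theorem~26.3]{Rock70}: $h$ is essentially smooth if and only if $h^*$ is essentially strictly convex, and $h$ is essentially strictly convex if and only if $h^*$ is essentially smooth. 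Applying both equivalences to a Legendre function $h$ immediately yields that $h^*$ is simultaneously essentially smooth and essentially strictly convex, hence of Legendre type; this settles the first assertion.

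For the isomorphism claim, I would use the basic inversion formula $\partial h^*=(\partial h)^{-1}$, valid for any lower semicontinuous proper convex $h$. Since $h$ is essentially smooth, $\partial h$ reduces to the single-valued map $\nabla h$ with $\dom\partial h=\intdom h$; since $h^*$ is also essentially smooth (just established), $\partial h^*$ reduces to $\nabla h^*$ with $\dom\partial h^*=\intdom h^*$. Reading the inversion formula through these identifications shows that $\nabla h$ is a bijection from $\intdom h$ onto $\dom\partial h^*=\intdom h^*$, with inverse precisely $\nabla h^*$. The injectivity here is exactly the content of essential strict convexity of $h$, and the surjectivity onto the full interior $\intdom h^*$ (no more, no less) is what essential smoothness of $h^*$ guarantees.

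It remains to upgrade this algebraic bijection to a topological isomorphism. For this I would invoke the standard fact that a finite convex function is continuously differentiable on the interior of its domain, so that $\nabla h$ is continuous on $\intdom h$ and, symmetrically, $\nabla h^*$ is continuous on $\intdom h^*$. A continuous bijection with continuous inverse is a homeomorphism, which is the asserted topological isomorphism.

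The main obstacle I anticipate is the careful bookkeeping at the boundary: one must verify that $\partial h$ admits no subgradients at any boundary point of $\dom h$ and that the range of $\nabla h$ fills out exactly $\intdom h^*$. Both facts hinge on the subtle duality between essential smoothness and essential strict convexity under conjugation, which is precisely where the Legendre hypothesis does its work; everything else is a formal manipulation of the inversion formula $\partial h^*=(\partial h)^{-1}$ together with continuity of convex gradients.
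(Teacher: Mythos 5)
The paper offers no proof of this statement --- it is quoted verbatim as a known result from \cite[Theorem~26.5]{Rock70} --- so the natural benchmark is Rockafellar's own argument, and yours is essentially that argument: the conjugacy duality of \cite[Theorem~26.3]{Rock70} to show $h^*$ is Legendre, the inversion formula $\partial h^*=(\partial h)^{-1}$ read through the identifications of \cite[Theorem~26.1]{Rock70} (essential smoothness $\Leftrightarrow$ $\partial h$ single-valued with $\dom\partial h=\intdom h$) to get the mutually inverse bijection, and continuity of convex gradients to upgrade it to a homeomorphism; your proof is correct. The one slip is your stated ``standard fact'' that a finite convex function is continuously differentiable on the interior of its domain --- false as written (consider $|\cdot|$) --- but what your argument actually needs and uses is the correct statement \cite[Theorem~25.5]{Rock70}: a convex function that is differentiable on an open set has a continuous gradient there, the differentiability itself being already supplied by essential smoothness.
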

\begin{corollary} \label{c:isom}
The mappings $\nabla f\colon U\to\RR^J$ and
$\nabla f^*\colon \RR^J\to U$ are continuous, bijective, and
inverses of each other.
\end{corollary}

\section{Left Bregman Farthest Distances and Farthest Points}\label{Lipschitz}

The following result generalizes Hiriart-Urruty's
\cite[Proposition~3.1 and Corollary~3.2]{urruty2} and
provides a characterization of left Bregman farthest points
(recall \eqref{e:montag:a} and \eqref{e:montag:b}).

\begin{proposition}
Let $y\in U$, $x\in C$, and $\lambda\geq 1$. Then
\begin{equation}\label{farcha}
x\in \bfproj{C}(y)\qquad \Leftrightarrow \qquad
(\forall c\in C)\;\; D(c,x)\leq \langle\nabla f(y)-\nabla f(x),c-x\rangle.
\end{equation}
If $x\in\bfproj{C}(y)$ and
\begin{equation}\label{greatercase}
z_{\lambda}:=
\nabla f^*(\lambda \nabla f(y)+(1-\lambda)\nabla f(x)),
\end{equation}
then
$x\in\bfproj{C}(z_{\lambda})$; moreover,
if $\lambda >1$, then $\bfproj{C}(z_{\lambda})=\{x\}$.
\end{proposition}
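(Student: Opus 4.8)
The plan is to prove the characterization \eqref{farcha} first, since both remaining assertions will follow from it by straightforward substitution. The characterization itself should come directly from expanding the definition of $D$. Recall that $x\in\bfproj{C}(y)$ means $x$ maximizes $c\mapsto D(c,y)$ over $C$, i.e.\ $(\forall c\in C)\; D(c,y)\leq D(x,y)$. The key is to expand the difference $D(c,y)-D(x,y)$ using \eqref{eq:D}. Since $y\in U$, both Bregman distances are finite, and a direct computation gives
\begin{equation*}
D(c,y)-D(x,y) = f(c)-f(x)-\scal{\nabla f(y)}{c-x}.
\end{equation*}
On the other hand, for any $x\in U$ one has $D(c,x)=f(c)-f(x)-\scal{\nabla f(x)}{c-x}$. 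Subtracting these two identities, the terms $f(c)-f(x)$ cancel, leaving
\begin{equation*}
\big(D(c,y)-D(x,y)\big) - D(c,x) = \scal{\nabla f(x)-\nabla f(y)}{c-x},
\end{equation*}
so that $D(c,y)-D(x,y) = D(c,x) - \scal{\nabla f(y)-\nabla f(x)}{c-x}$. Hence the maximizing inequality $D(c,y)-D(x,y)\leq 0$ is equivalent to $D(c,x)\leq\scal{\nabla f(y)-\nabla f(x)}{c-x}$, which is exactly \eqref{farcha}.

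Next I would turn to the statement about $z_\lambda$. The idea is to feed $z_\lambda$ into the right-hand side of \eqref{farcha} and show the inequality still holds (and becomes strict when $\lambda>1$). By Corollary~\ref{c:isom}, $\nabla f^*$ maps $\RR^J$ into $U$, so $z_\lambda\in U$ is well defined and, crucially, $\nabla f(z_\lambda)=\lambda\nabla f(y)+(1-\lambda)\nabla f(x)$. Therefore
\begin{equation*}
\nabla f(z_\lambda)-\nabla f(x) = \lambda\big(\nabla f(y)-\nabla f(x)\big).
\end{equation*}
For any $c\in C$, the right-hand side of \eqref{farcha} evaluated at $z_\lambda$ becomes $\lambda\scal{\nabla f(y)-\nabla f(x)}{c-x}$. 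Since $x\in\bfproj{C}(y)$, the already-established characterization gives $\scal{\nabla f(y)-\nabla f(x)}{c-x}\geq D(c,x)\geq 0$. As $\lambda\geq 1$, we obtain
\begin{equation*}
D(c,x)\leq \scal{\nabla f(y)-\nabla f(x)}{c-x}\leq \lambda\scal{\nabla f(y)-\nabla f(x)}{c-x} = \scal{\nabla f(z_\lambda)-\nabla f(x)}{c-x},
\end{equation*}
which is precisely the condition \eqref{farcha} certifying $x\in\bfproj{C}(z_\lambda)$.

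Finally, for the uniqueness claim when $\lambda>1$, I would argue that any other maximizer forces a contradiction. Suppose $x'\in\bfproj{C}(z_\lambda)$ with $x'\neq x$; then $D(x',x)\leq\scal{\nabla f(z_\lambda)-\nabla f(x)}{x'-x}=\lambda\scal{\nabla f(y)-\nabla f(x)}{x'-x}$. Using \eqref{farcha} for $x$ at $y$ (with $c=x'$) and then chaining the inequalities, one arrives at $\lambda D(x',x)\leq\scal{\nabla f(z_\lambda)-\nabla f(x)}{x'-x}$ as well, forcing $(\lambda-1)D(x',x)\leq 0$; since $\lambda>1$ this yields $D(x',x)\leq 0$, hence $D(x',x)=0$. \textbf{The main obstacle} is this last step: to conclude $x'=x$ from $D(x',x)=0$ I must invoke the strict convexity built into Assumption~\textbf{A1}, namely that $f$ is essentially strictly convex of Legendre type, which guarantees $D(a,b)=0\iff a=b$ for $a,b\in U$. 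Verifying that this separation property of $D$ is indeed available (and correctly tracking the strictness through the chain of inequalities) is the delicate part; the rest is bookkeeping with the identity $\nabla f(z_\lambda)-\nabla f(x)=\lambda(\nabla f(y)-\nabla f(x))$.
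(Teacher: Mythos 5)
Your treatment of \eqref{farcha} and of the claim $x\in\bfproj{C}(z_\lambda)$ for $\lambda\geq 1$ is correct and coincides with the paper's argument. The problem is in the uniqueness part ($\lambda>1$), and it is a genuine gap, not bookkeeping. The two inequalities you actually have at your disposal are
(a) $D(x',x)\leq \scal{\nabla f(z_\lambda)-\nabla f(x)}{x'-x}$ --- which, despite the way you phrase it, is a consequence of $x\in\bfproj{C}(z_\lambda)$ (apply \eqref{farcha} with $c=x'$), not of the supposition $x'\in\bfproj{C}(z_\lambda)$ --- and
(b) $\lambda D(x',x)\leq \scal{\nabla f(z_\lambda)-\nabla f(x)}{x'-x}$, obtained by scaling \eqref{farcha} for $x$ at $y$. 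Both (a) and (b) bound (multiples of) $D(x',x)$ \emph{above} by the same quantity, and no chaining of two upper bounds can produce $\lambda D(x',x)\leq D(x',x)$, i.e.\ $(\lambda-1)D(x',x)\leq 0$; for that you would need the \emph{lower} bound $\scal{\nabla f(z_\lambda)-\nabla f(x)}{x'-x}\leq D(x',x)$, which you never establish. The tell-tale symptom: neither (a) nor (b) uses the hypothesis $x'\in\bfproj{C}(z_\lambda)$ anywhere, so no combination of them could possibly force $x'=x$.

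The missing ingredient is the maximality of $x'$ itself. Since $x'\in\bfproj{C}(z_\lambda)$ and $x\in C$, we have $D(x',z_\lambda)\geq D(x,z_\lambda)$, and the identity underlying your proof of \eqref{farcha} (with $y$ replaced by $z_\lambda$ and $c$ by $x'$) reads $D(x',z_\lambda)-D(x,z_\lambda)=D(x',x)-\scal{\nabla f(z_\lambda)-\nabla f(x)}{x'-x}$; hence maximality of $x'$ delivers exactly the missing lower bound $\scal{\nabla f(z_\lambda)-\nabla f(x)}{x'-x}\leq D(x',x)$. Combined with (b) this gives $\lambda D(x',x)\leq D(x',x)$, so $(\lambda-1)D(x',x)\leq 0$, whence $D(x',x)=0$, and then $x'=x$ by the essential strict convexity of $f$ on $U$ (assumption \textbf{A1}) --- that last step, the one you flagged as the main obstacle, was in fact never the problem. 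For comparison, the paper uses the same essential ingredient in the form of the equality $D(x,z_\lambda)=D(\hat{x},z_\lambda)$, derives from it the identity $D(x,y)-D(\hat{x},y)=\tfrac{1-\lambda}{\lambda}D(\hat{x},x)$, and contradicts the maximality of $x$ at $y$; once your argument is repaired as above, it is an equally valid and slightly more streamlined route to the same conclusion.
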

\begin{proof}
By definition, $x\in\bfproj{C}(y)$ means that for each $c\in C$,
$0\geq D(c,y)-D(x,y)$, i.e.,
\begin{align}0 & \geq f(c)-f(x)-\langle\nabla f(y),c-x\rangle\notag\\
& =f(c)-f(x)-\langle\nabla f(x),c-x\rangle +\langle\nabla f(x)-\nabla
f(y),c-x\rangle\notag\\
& =D(c,x)-\langle \nabla f(y)-\nabla f(x),c-x\rangle.\notag
\end{align}
Hence \eqref{farcha} follows.
Now assume that $x\in\bfproj{C}(y)$ and take an arbitrary $c\in C$.
By \eqref{farcha},
\begin{equation}\label{positive}
\langle\nabla f(y)-\nabla f(x),c-x\rangle\geq 0.
\end{equation}
The definition of $z_\lambda$ and \eqref{positive} result in
\begin{equation}\label{bigger}
\langle\nabla f(z_{\lambda})-\nabla f(x),c-x\rangle
=\lambda\langle \nabla f(y)-\nabla f(x),c-x\rangle
\geq \langle \nabla f(y)-\nabla f(x),c-x\rangle.
\end{equation}
Now \eqref{farcha} and \eqref{bigger} imply
\begin{align}
D(c,x) &\leq\langle \nabla f(y)-\nabla f(x),c-x\rangle
\leq
\langle\nabla f(z_{\lambda})-\nabla f(x),c-x\rangle,
\end{align}
which --- again by
\eqref{farcha} --- yields that $x\in \bfproj{C}(z_{\lambda})$.
Finally, assume that $\lambda >1$ and let
$\hat{x}\in \bfproj{C}(z_{\lambda})$.
By \eqref{greatercase},
$x\in\bfproj{C}(z_{\lambda})$. Since
$D(x,z_{\lambda})=D(\hat{x},z_{\lambda})$,
we have
\begin{align*}
0 & =D(x,z_{\lambda})-D(\hat{x},z_{\lambda})\\
& =f(x)-f(\hat{x})-\langle\nabla f(z_{\lambda}),x-\hat{x}\rangle\\
& =f(x)-f(\hat{x})-\langle\lambda \nabla f(y)+(1-\lambda)\nabla f(x),x-\hat{x}\rangle
\end{align*}
so that
$$(1-\lambda)[f(x)-f(\hat{x})-\langle \nabla f(x),x-\hat{x}\rangle]
+\lambda [f(x)-f(\hat{x})-\langle\nabla f(y),x-\hat{x}\rangle]=0.$$
Then
$
(1-\lambda)[f(\hat{x})-f(x)-\langle\nabla f(x),\hat{x}-x\rangle]
=\lambda [f(x)-f(\hat{x})-\langle \nabla f(y),x-\hat{x}\rangle]$, and thus
$$(1-\lambda)D(\hat{x},x)=\lambda [D(x,y)-D(\hat{x},y)].$$
It follows that
\begin{equation} \label{e:montag:c}
D(x,y)-D(\hat{x},y)=\frac{1-\lambda}{\lambda}D(\hat{x},x).
\end{equation}
Assume that $x\neq \hat{x}$.
Then $D(\hat{x},x)>0$, and, since  $\lambda >1$, we get
$0>(1-\lambda) D(\hat{x},x)$.
In view of \eqref{e:montag:c}, we conclude
$D(x,y)<D(\hat{x},y)$,
which contradicts that $x$ is a farthest point of $y$.
Therefore, $x=\hat{x}$.
\end{proof}

It will be convenient to define $f^\vee = f\circ (-\Id)$, i.e.,
$f^{\vee}(y)=f(-y)$ for every $y\in \RR^J$.
Our standing assumptions \textbf{A1}--\textbf{A3} imply that the function
\begin{equation}\label{starting}
-f^{\vee}+\iota_{-C}\colon  \RR^J\rightarrow\RX\colon x \mapsto
\begin{cases}
-f(-x),&  \text {if $x\in -C$};\\
\pinf, & \text{otherwise}
\end{cases}
\end{equation}
is lower semicontinuous.
This function plays a role in our next result, where we show that
$\bfD{C}$ is a locally Lipschitz function on $U$.

\begin{proposition}\label{distance}
The left Bregman farthest distance function $\bfD{C}$ is continuous on $U$
and it can be written as the composition
\begin{equation} \label{e:montag:d}
\bfD{C} =\big(f^*+(-f^{\vee}+\iota_{-C})^*\big)\circ \nabla f, 
\end{equation}
where $f^*+(-f^{\vee}+\iota_{-C})^*$ is locally Lipschitz and
$\nabla f$ is continuous. 
Consequently, $\bfD{C}$ is locally Lipschitz on $U$ provided that
$\nabla f$ has the same property --- as  is the case when
$f$ is twice continuously differentiable.
Finally,
\begin{equation}\label{newconjugate}
(-f^{\vee}+\iota_{-C})^*=\bfD{C}\circ\nabla f^*-f^*,
\end{equation}
and hence
$\bfD{C}\circ \nabla f^*$ is a locally Lipschitz
convex function with full domain.
\end{proposition}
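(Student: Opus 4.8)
The plan is to verify the composition formula \eqref{e:montag:d} by a direct computation from the definition of $\bfD{C}$, and then to read off all remaining assertions from the known regularity of the two summands. First I would expand, for $y\in U$,
\[
\bfD{C}(y)=\sup_{x\in C}\big[f(x)-f(y)-\scal{\nabla f(y)}{x-y}\big]
=\big[\scal{\nabla f(y)}{y}-f(y)\big]+\sup_{x\in C}\big[f(x)-\scal{\nabla f(y)}{x}\big].
\]
The first bracket is exactly $f^*(\nabla f(y))$ by the Fenchel--Young equality applied at the pair $(y,\nabla f(y))$, which is valid because $\nabla f(y)\in\partial f(y)$ for $y\in U$. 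For the supremum, I would substitute $x=-w$ and recognize it as a Fenchel conjugate: writing $g:=-f^{\vee}+\iota_{-C}$, one has $g^*(u)=\sup_{x\in -C}[\scal{u}{x}+f(-x)]=\sup_{w\in C}[f(w)-\scal{u}{w}]$, so the supremum above equals $g^*(\nabla f(y))$. Combining the two pieces yields $\bfD{C}(y)=(f^*+g^*)(\nabla f(y))$, which is precisely \eqref{e:montag:d}.

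Next I would establish that each summand is a finite convex function on all of $\RR^J$, hence locally Lipschitz. For $f^*$ this is immediate: Assumption~\textbf{A2} gives $\dom f^*=\RR^J$, and a finite convex function on $\RR^J$ is locally Lipschitz. For $g^*$, convexity is automatic since it is a pointwise supremum of affine functions of $u$; finiteness is where the compactness hypothesis enters. Because $C$ is compact by \textbf{A3} and $f$ is continuous on the open set $U\supseteq C$, the map $w\mapsto f(w)-\scal{u}{w}$ is continuous on $C$ and therefore attains a finite maximum for every $u$, so $\dom g^*=\RR^J$ and $g^*$ is again locally Lipschitz. Thus $f^*+g^*$ is locally Lipschitz on $\RR^J$; composing with the continuous map $\nabla f$ (Corollary~\ref{c:isom}) shows $\bfD{C}$ is continuous, and composing two locally Lipschitz maps shows $\bfD{C}$ is locally Lipschitz on $U$ whenever $\nabla f$ is --- in particular when $f$ is twice continuously differentiable.

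Finally, for \eqref{newconjugate} I would precompose \eqref{e:montag:d} with $\nabla f^*$. By Corollary~\ref{c:isom} the maps $\nabla f$ and $\nabla f^*$ are mutually inverse, so $\nabla f\circ\nabla f^*=\Id$ on $\RR^J$ and hence $\bfD{C}\circ\nabla f^*=f^*+g^*$. Subtracting $f^*$ gives $g^*=\bfD{C}\circ\nabla f^*-f^*$, which is \eqref{newconjugate}, while the identity $\bfD{C}\circ\nabla f^*=f^*+g^*$ simultaneously exhibits $\bfD{C}\circ\nabla f^*$ as a locally Lipschitz convex function with full domain. I expect the only delicate point to be the finiteness of $g^*$ everywhere: this is exactly where boundedness (compactness) of $C$ is used, mirroring the fact that a farthest-distance function is only well-behaved when the underlying set is bounded.
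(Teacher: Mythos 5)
Your proof is correct and follows essentially the same route as the paper's: the same decomposition \eqref{e:montag:d}, obtained by Fenchel--Young equality at the pair $(y,\nabla f(y))$ together with the substitution $z=-c$ that identifies the supremum as a conjugate of $-f^{\vee}+\iota_{-C}$, and the same precomposition by $\nabla f^*$ to obtain \eqref{newconjugate}. The one place you genuinely diverge is the justification that $(-f^{\vee}+\iota_{-C})^*$ has full domain: the paper observes that $-f^{\vee}+\iota_{-C}$ is proper and $1$-coercive (which holds precisely because its domain $-C$ is bounded) and cites \cite[Proposition~X.1.3.8]{urruty1}, whereas you argue directly that $\sup_{w\in C}\big[f(w)-\scal{u}{w}\big]$ is finite for every $u$, being the supremum of a continuous function over the compact set $C\subset U$. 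Your justification is more elementary and self-contained, avoiding the coercivity lemma altogether; since the paper's coercivity is itself a consequence of the boundedness of $C$, both arguments rest on the same underlying fact and both then conclude via ``a finite convex function on $\RR^J$ is locally Lipschitz.'' Everything else --- the composition arguments for continuity and local Lipschitzness, the remark that twice continuous differentiability of $f$ makes $\nabla f$ locally Lipschitz, and the use of $\nabla f\circ\nabla f^*=\Id$ from Corollary~\ref{c:isom} --- matches the paper's proof.
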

\begin{proof}
Fix $y\in U$. Then
\begin{align*}
\bfD{C}(y) &=\sup_{c\in C}[f(c)-f(y)-\langle\nabla f(y),c-y\rangle]\\
&= \sup_{c\in C}[f(c)-\langle \nabla f(y),c\rangle]+ f^*(\nabla f(y))\\
&=f^*(\nabla f(y))+\sup_{c\in C}[\langle \nabla f(y),-c\rangle -(-f)(c)]\\
&=f^*(\nabla f(y))+\sup_{c}[\langle \nabla f(y),-c\rangle -(-f(c)+\iota_{C}(c))]\\
&=f^*(\nabla f(y))+\sup_{z}[\langle \nabla f(y),z\rangle -(-f(-z)+\iota_{-C}(z))]\\
&=f^*(\nabla f(y))+ (-f^{\vee}+\iota_{-C})^*(\nabla f(y)).
\end{align*}
The assumptions \textbf{A1}--\textbf{A3}
imply that $-f^{\vee}+\iota_{-C}$ is
proper and $1$-coercive.
By \cite[Proposition~X.1.3.8]{urruty1},
the convex function
$(-f^{\vee}+\iota_{C})^*$ has full domain and
it thus is locally Lipschitz on $\RR^J$.
Since $f^*$ is likewise locally Lipschitz on $\RR^J$,
Fact~\ref{isom} yields the continuity of $\bfD{C}$.
The ``Consequently'' statement is a consequence of the Mean Value Theorem.
Finally, pre-composing \eqref{e:montag:d} by $\nabla f^*$ followed
by re-arranging yields \eqref{newconjugate},
which in turn shows that
$\bfD{C}\circ\nabla f^*$ is a locally Lipschitz convex function,
as it is the sum of two such functions.
\end{proof}

\section{Left Bregman Farthest-Point Maps }\label{monotone}

The next result contains some useful properties of the farthest point map
and item~(iii) is an extension of \cite[Proposition~3.3]{urruty2}.

\begin{proposition}\label{farthest}
Let $x$ and $y$ be in $U$. Then the following hold.
\begin{enumerate}
\item $\textstyle \bfproj{C}(x)\neq\emp$.
\item If $(x_n)_\nnn$ is a sequence in $U$ converging to $x$ and
$(c_n)_\nnn$ is a sequence in $C$ such that $(\forall \nnn)$
$c_{n}\in \bfproj{C}(x_{n})$, then
all cluster points of $(c_n)_\nnn$ lie in $\bfproj{C}(x)$.
Consequently,
$\bfproj{C}\colon U\To C$ is compact-valued and upper
semicontinuous (in the sense of set-valued analysis).
\item \label{farthest:e}
$\langle -\bfproj{C}(x)+\bfproj{C}(y), \nabla f(x)-\nabla
f(y)\rangle\geq 0$
and hence $-\bfproj{C}\circ\nabla f^*$ is monotone.
\end{enumerate}
\end{proposition}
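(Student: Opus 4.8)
The plan is to handle the three items in turn; the single analytic fact that drives everything is that, by Corollary~\ref{c:isom}, $\nabla f$ is continuous on $U$, so that $D$ is jointly continuous on $U\times U$ and, for each fixed $x\in U$, the map $c\mapsto D(c,x)=f(c)-f(x)-\langle\nabla f(x),c-x\rangle$ is continuous on the compact set $C$. For (i) I would then simply invoke the Weierstrass extreme value theorem: $C$ is compact by \textbf{A3} and $c\mapsto D(c,x)$ is continuous on $C$, so the supremum defining $\bfD{C}(x)$ is attained, i.e.\ $\bfproj{C}(x)\neq\emp$.

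For (ii), let $c$ be a cluster point of $(c_n)_\nnn$, say $c_{n_k}\to c$; since $C$ is closed, $c\in C$. Because $c_{n_k}\in\bfproj{C}(x_{n_k})$, we have $D(c_{n_k},x_{n_k})\geq D(c',x_{n_k})$ for every $c'\in C$. Letting $k\to\infty$ and using the joint continuity of $D$ (together with $x_{n_k}\to x$) gives $D(c,x)\geq D(c',x)$ for all $c'\in C$, i.e.\ $c\in\bfproj{C}(x)$. The ``Consequently'' clause then follows from standard set-valued analysis: $\bfproj{C}(x)=\menge{c\in C}{D(c,x)=\bfD{C}(x)}$ is closed (it is the intersection of $C$ with a level set of a continuous function) and contained in the compact set $C$, hence compact; and a set-valued map into a fixed compact set whose graph is sequentially closed --- which is exactly the cluster-point property just established --- is upper semicontinuous.

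For (iii), I read the displayed inequality as asserting $\langle b-a,\nabla f(x)-\nabla f(y)\rangle\geq 0$ for every $a\in\bfproj{C}(x)$ and $b\in\bfproj{C}(y)$, so I fix such $a,b$. Maximality of $a$ at $x$ tested against the competitor $b$ gives $D(a,x)\geq D(b,x)$, and maximality of $b$ at $y$ tested against $a$ gives $D(b,y)\geq D(a,y)$; adding these yields $D(a,x)+D(b,y)\geq D(b,x)+D(a,y)$. Expanding each Bregman distance, the function values $f(a),f(b),f(x),f(y)$ all cancel and what survives is precisely $\langle\nabla f(x)-\nabla f(y),b-a\rangle\geq 0$, which is the claim. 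Monotonicity of $-\bfproj{C}\circ\nabla f^*$ is then immediate: for $u,v\in\RR^J$ set $x=\nabla f^*(u)$ and $y=\nabla f^*(v)$, so that $\nabla f(x)=u$ and $\nabla f(y)=v$ by Corollary~\ref{c:isom}; a selection of $-\bfproj{C}(\nabla f^*(u))$ has the form $-a$ with $a\in\bfproj{C}(x)$, and the inequality just proved reads $\langle (-a)-(-b),\,u-v\rangle\geq 0$, which is exactly monotonicity.

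I do not expect any single step to be a genuine obstacle. The only points requiring care are deducing the joint continuity of $D$ from the continuity of $\nabla f$ (this underlies both (i) and (ii)) and correctly interpreting the Minkowski-difference notation in (iii), so that the cancellation is carried out for \emph{arbitrary} selections $a\in\bfproj{C}(x)$, $b\in\bfproj{C}(y)$ rather than for a single-valued map.
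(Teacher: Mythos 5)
Your proof is correct, and for items (i) and (iii) it coincides with the paper's argument: Weierstrass on the compact set $C$ for (i), and for (iii) exactly the same device of adding the two maximality inequalities $D(a,x)\geq D(b,x)$ and $D(b,y)\geq D(a,y)$ so that all function values cancel, followed by the substitution $x=\nabla f^*(u)$, $y=\nabla f^*(v)$ via Corollary~\ref{c:isom}. The one genuine (if modest) divergence is in (ii): the paper passes to the limit in the single \emph{equality} $D(c_n,x_n)=\bfD{C}(x_n)$, which requires knowing that the farthest-distance function $\bfD{C}$ is continuous on $U$ --- a fact it imports from Proposition~\ref{distance}, whose proof runs through the conjugacy decomposition \eqref{e:montag:d}. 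You instead pass to the limit in the family of \emph{inequalities} $D(c_{n_k},x_{n_k})\geq D(c',x_{n_k})$, one for each fixed competitor $c'\in C$, which needs only the joint continuity of $D$ on $U\times U$, itself immediate from the continuity of $f$ and $\nabla f$ on the open set $U$. Your route therefore makes Proposition~\ref{farthest}(ii) logically independent of Proposition~\ref{distance}, which is slightly more self-contained; the paper's route is shorter on the page because it can quote the continuity of $\bfD{C}$ in one line. Both limit passages are valid, and your closing remarks --- that the fixed-$x$ argmax set is a closed subset of the compact set $C$, and that a closed-graph map into a fixed compact set is upper semicontinuous --- correctly supply the ``Consequently'' clause.
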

\begin{proof}
(i): Since $D(\cdot,x)$ is continuous on $U$ and $C$ is compact subset of
$U$, it follows
that $D(\cdot,x)$ attains its supremum over $C$.

(ii): Suppose that $(x_n)_\nnn$ lies in $U$ and converges to $x$,
that $(c_n)_\nnn$ lies in $C$, and that $(\forall\nnn)$
$c_{n}\in \bfproj{C}(x_{n})$, i.e.,
\begin{equation} \label{e:montag:e}
(\forall\nnn)\quad
f(c_{n})-f(x_{n})-\langle \nabla
f(x_{n}),c_{n}-x_{n}\rangle= D(c_n,x_n)=\bfD{C}(x_{n}).
\end{equation}
By \textbf{A3}, $(c_n)_\nnn$ has cluster
points and they all lie in $C$. After passing to a subsequence if
necessary, we assume that $c_n\to \bar{c}\in C$.
Since $\bfD{C}$ is continuous on $U$ by Proposition~\ref{distance},
we pass to the limit in \eqref{e:montag:e} and deduce that
$f(\bar{c})-f(x)-\langle \nabla f(x),\bar{c}-x\rangle=D(\bar{c},x)=\bfD{C}(x)$.
Hence $\bar{c}\in \bfproj{C}(x)$.
The same reasoning (with $(x_n)_\nnn = (x)_\nnn$)
shows that $\bfproj{C}(x)$ is closed and hence compact (since $C$ is
compact).
Therefore, $\bfproj{C}$ is compact-valued and upper semicontinuous
 on $U$.

(iii): Let $p\in \bfproj{C}x$ and $q\in \bfproj{C}y$. Then
$D(p,x)\geq D(q,x)$ and $D(q,y)\geq D(p,y)$.
Using \eqref{eq:D}, we obtain
$f(p)-f(q)-\langle\nabla f(x), p-q\rangle \geq 0$ and
$f(q)-f(p)-\langle\nabla f(y), q-p\rangle \geq 0$.
Adding these two inequalities yields
$\langle\nabla f(x)-\nabla f(y), q-p\rangle\geq 0$.
The result now follows from Corollary~\ref{c:isom}.
\end{proof}

\begin{definition}
The set $C$ is \emph{Klee with respect to the left Bregman distance},
or simply \emph{\bDK},
if for every $x\in U$,
$\bfproj{C}(x)$ is nonempty and a singleton.
\end{definition}

\begin{proposition} \label{p:montag:f}
Suppose that $C$ is \bDK. Then $\bfproj{C}\colon U\to C$ is continuous.
Hence $-\bfproj{C}\circ \nabla f^*$ is continuous and maximal monotone.
\end{proposition}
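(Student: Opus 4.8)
The plan is to prove continuity of $\bfproj{C}$ first, and then deduce the continuity and maximal monotonicity of $-\bfproj{C}\circ\nabla f^*$ from it. For the continuity of $\bfproj{C}$, the key observation is that the \bDK{} hypothesis makes $\bfproj{C}$ single-valued everywhere on $U$, so it is an ordinary function rather than a genuinely set-valued map. The crucial input is Proposition~\ref{farthest}(ii), which already tells us that $\bfproj{C}$ is compact-valued and upper semicontinuous in the set-valued sense. The standard fact I would invoke is that a single-valued, upper semicontinuous set-valued map with closed (here, compact) values into a compact set is continuous as a function. Concretely, I would argue directly: take any sequence $x_n\to x$ in $U$, and consider $c_n := \bfproj{C}(x_n)\in C$. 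Since $C$ is compact by \textbf{A3}, $(c_n)_\nnn$ has cluster points; by Proposition~\ref{farthest}(ii) every such cluster point lies in $\bfproj{C}(x)$. But \bDK{} forces $\bfproj{C}(x)=\{\bfproj{C}(x)\}$ to be a singleton, so the only possible cluster point is $\bfproj{C}(x)$ itself. A bounded sequence in a compact set with a unique cluster point converges to it, hence $c_n\to\bfproj{C}(x)$, establishing continuity of $\bfproj{C}$ on $U$.

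Next I would handle the composition. By Corollary~\ref{c:isom}, $\nabla f^*\colon\RR^J\to U$ is continuous and bijective, so $\bfproj{C}\circ\nabla f^*$ is a composition of continuous maps and is therefore continuous on all of $\RR^J$; negation clearly preserves continuity, giving continuity of $-\bfproj{C}\circ\nabla f^*$. For the monotonicity, I can simply quote Proposition~\ref{farthest}(iii), which already states that $-\bfproj{C}\circ\nabla f^*$ is monotone (the inequality $\langle -\bfproj{C}(x)+\bfproj{C}(y),\nabla f(x)-\nabla f(y)\rangle\geq 0$ translates, after the substitution $x\mapsto\nabla f^*(u)$, $y\mapsto\nabla f^*(v)$, into monotonicity of $u\mapsto -\bfproj{C}(\nabla f^*(u))$).

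The final step is upgrading monotonicity to \emph{maximal} monotonicity. Here I would use the well-known fact that a monotone operator that is everywhere single-valued and continuous (indeed, even just everywhere defined and continuous, or hemicontinuous) on all of $\RR^J$ is automatically maximal monotone. Since $-\bfproj{C}\circ\nabla f^*$ is single-valued, continuous, and has full domain $\RR^J$, maximality follows immediately. I expect this to be the step where one must be careful to cite the correct standard result: a monotone operator with $\dom T=\RR^J$ that is continuous is maximal monotone, which in finite dimensions can be obtained, for instance, from the characterization that a monotone, everywhere-defined, hemicontinuous operator is maximal.

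The main obstacle, such as it is, is purely the first step: making the passage from set-valued upper semicontinuity plus single-valuedness to genuine continuity precise. Everything else is a clean appeal to results already in hand (Corollary~\ref{c:isom}, Proposition~\ref{farthest}(ii)--(iii)) together with one textbook fact about maximal monotonicity of continuous full-domain operators. I would want to state the single-valued-usc-implies-continuous argument explicitly via the sequential cluster-point argument above, rather than leaving it as a black box, since that is the only place where the \bDK{} hypothesis is genuinely used.
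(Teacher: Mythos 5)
Your proposal is correct and follows essentially the same route as the paper: continuity of $\bfproj{C}$ from Proposition~\ref{farthest}(ii) plus single-valuedness (you make explicit the sequential cluster-point argument that the paper leaves implicit), continuity of the composition via Corollary~\ref{c:isom}, monotonicity from Proposition~\ref{farthest}(iii), and maximality from the standard fact that a continuous, everywhere-defined monotone operator on $\RR^J$ is maximal monotone (the paper cites Example~12.7 of Rockafellar--Wets for this last step).
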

\begin{proof}
By Proposition~\ref{farthest}(ii),
$\bfproj{C}$ is continuous on $U$.
This and the continuity of
$\nabla f^*:\RR^J\rightarrow U$ (see Corollary~\ref{c:isom}) imply that
$-\bfproj{C}\circ\nabla f^*$ is continuous.
On the other hand,
Proposition~\ref{farthest}(iii) shows that
$-\bfproj{C}\circ\nabla f^*$ is monotone.
Altogether, using \cite[Example~12.7]{Rock98},
we conclude that
$-\bfproj{C}\circ\nabla f^*$ is maximal monotone on $\RR^J$.
\end{proof}

The Br\'ezis-Haraux range approximation theorem plays a crucial role in the
proof of the following main result.
It is interesting to note that the Hilbert space analogue
\cite[Proposition~6.2]{west} by Westphal and Schwartz
relies only on the less powerful Minty's theorem.

\begin{theorem}[\bDK-sets are singletons]
Suppose that $C$ is \bDK. Then $C$ is a singleton.
\end{theorem}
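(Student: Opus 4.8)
The plan is to recast the statement in terms of maximal monotone operators and then to deploy the Br\'ezis--Haraux range approximation theorem in the place where the Hilbert-space argument of Westphal and Schwartz uses only Minty's theorem. Set $h:=(-f^{\vee}+\iota_{-C})^{*}$, so that \eqref{newconjugate} identifies the convex function $\Phi:=\bfD{C}\circ\nabla f^{*}=f^{*}+h$, which has full domain and is locally Lipschitz by Proposition~\ref{distance}. Because $h(z)=\sup_{c\in C}\big(f(c)-\scal{z}{c}\big)$, the maximizers of this supremum are exactly $\bfproj{C}(\nabla f^{*}(z))$; hence, under the \bDK\ hypothesis the maximizer is unique, $h$ is differentiable, and $\nabla h=-\bfproj{C}\circ\nabla f^{*}=:T$. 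First I would record that $T$ is single-valued, continuous, and maximal monotone (Proposition~\ref{p:montag:f}), that $\ran T\subseteq -C$ is bounded, and --- crucially --- that each value of $T$ is an \emph{extreme} point of $-\conv C$, since $c\mapsto f(c)-\scal{z}{c}$ is convex and its unique maximizer over the compact set $C$ must be extreme. The goal then becomes to show that $T$ is constant, which is equivalent to $h$ being affine and hence to $C$ being a singleton.

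Next I would feed this into Br\'ezis--Haraux. Since $\nabla f^{*}=\partial f^{*}$ and $T=\partial h$ are both subdifferentials of proper lower semicontinuous convex functions, they are rectangular, and their sum $\partial f^{*}+\partial h=\partial\Phi=\nabla\Phi$ is maximal monotone; the theorem then yields
\begin{equation*}
\inte\ran\nabla\Phi=\inte\big(\ran\nabla f^{*}+\ran T\big)=\inte\big(U+\ran T\big).
\end{equation*}
On the other hand, a short conjugacy computation gives $\dom h^{*}=-\conv C$, so $\dom\Phi^{*}=\dom f-\conv C$, and since $\ran\nabla\Phi=\dom\partial\Phi^{*}$, standard convex analysis gives $\inte\ran\nabla\Phi=\inte\dom\Phi^{*}=U-\conv C$. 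Comparing the two expressions shows that $\ran T$ fills out $-\conv C$ in the relevant interior sense; in particular every exposed point of $\conv C$ is attained as a farthest point, as one also sees directly by letting $\nabla f(y)=z$ run to infinity along a supporting direction of $C$ and using compactness.

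The final and hardest step is to convert this ``range-filling'' information into the rigidity that $\ran T$ is a single point. The difficulty is that the set-level identity above is too weak on its own: when $U=\RR^{J}$ (as in the Euclidean case) it degenerates to $\RR^{J}=\RR^{J}$ and carries no information, so one cannot conclude by comparing closures alone. Instead I would exploit two pieces of finer structure simultaneously: that $\ran T$ is a \emph{connected} subset of the extreme points of $-\conv C$ (being the continuous image of the connected set $U$), yet attains every exposed point of $-\conv C$; and the strict expansion property \eqref{greatercase}, which says that if $x\in\bfproj{C}(y)$ and $\lambda>1$ then $\bfproj{C}(z_{\lambda})=\{x\}$, so that $T$ is \emph{constant} along each outgoing $\nabla f$-ray. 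Assuming $C$ contained two distinct points, these facts should produce a point of $U$ admitting two distinct farthest points, contradicting the \bDK\ hypothesis; making this incompatibility precise --- reconciling connectedness in the extreme points with the attainment of all exposed points and the ray-constancy of $T$ --- is where the real work lies, and is the step I expect to be the main obstacle.
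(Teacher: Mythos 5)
Your first two paragraphs assemble exactly the machinery of the paper's own proof: you identify $T=-\bfproj{C}\circ\nabla f^*$ as single-valued, continuous and maximal monotone, realize it as $\nabla h$ with $h=(-f^{\vee}+\iota_{-C})^*$ (which makes the rectangularity needed for Br\'ezis--Haraux transparent --- a nice touch), and you obtain the range identity $\inte\ran(\nabla f^*+T)=\inte\big(U+\ran T\big)$. The genuine gap is in how you then try to use this identity. You read it as information about $\ran T$ alone (``range-filling''), judge that information too weak, and substitute a speculative rigidity argument --- connectedness of $\ran T$ inside the extreme points of $-\conv C$, attainment of exposed points, ray-constancy via \eqref{greatercase} --- which you yourself flag as an unresolved ``main obstacle.'' That step is never carried out, so the proposal is incomplete; and as sketched it would be delicate to make rigorous, since nothing in it rules out, say, a single proper face of $\conv C$ absorbing all the structure you describe.

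The missing idea is that the Br\'ezis--Haraux identity should be read in the opposite direction: it is a statement about the range of the \emph{sum}, and the only thing one needs from it is that $0$ lies in that range. Indeed, $\ran T\subseteq -C$ and $C\subset U$ with $U$ open, so picking any $-c_0\in\ran T$ gives a ball $c_0+\varepsilon\ball\subseteq U$, whence $\varepsilon\ball\subseteq U+\ran T$; thus $0\in\inte\big(U+\ran T\big)=\inte\ran(\nabla f^*+T)$, and in particular there exists $x\in\RR^J$ with $\nabla f^*(x)=\bfproj{C}(\nabla f^*(x))$. Writing $y:=\nabla f^*(x)\in U$, the point $y$ is its own farthest point, so $\bfD{C}(y)=D(y,y)=0$; since $D(\cdot,y)\geq 0$ on $C$ and vanishes only at $y$ (strict convexity of the Legendre function $f$ on $U$, with $C\subset U$), this forces $C=\{y\}$. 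This also shows why your worry about the case $U=\RR^J$ is misplaced: the identity does not degenerate to the tautology $\RR^J=\RR^J$, because its left-hand side $\inte\ran(\nabla f^*+T)$ is a priori unknown --- the identity asserts that the sum is essentially surjective, which is precisely the nontrivial output of Br\'ezis--Haraux and in particular yields the zero of $\nabla f^*+T$ needed above. With this one observation, your first two paragraphs already constitute the paper's proof; the entire third paragraph can be deleted.
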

\begin{proof}
Recall that Corollary~\ref{c:isom} and consider the following
two maximal monotone operators (see Proposition~\ref{p:montag:f})
$\nabla f^*$ and $-\bfproj{C}\circ\nabla f^*$.
The Br\'ezis-Haraux range approximation theorem
(see \cite[Section~19]{Simons}) implies that
\begin{equation} \label{e:montag:g}
\inte\ran\big(\nabla f^*-(\bfproj{C}\circ\nabla f^*)\big)=
\inte \big(\ran\nabla f^*-\ran(\bfproj{C}\circ\nabla f^*)\big)
=\inte\big( U -\ran(\bfproj{C}\circ\nabla f^*)\big).
\end{equation}
Since
$\ran(\bfproj{C}\circ\nabla f^*)\subseteq C$ and $C\subset U$,
we have
$0\in \inte (U-\ran(\bfproj{C}\circ\nabla f^*))$, and hence,
by \eqref{e:montag:g},
$0\in \inte \ran (\nabla f^*-(\bfproj{C}\circ\nabla f^*))$.
Thus there exists $x\in\RR^J$ such that
$\bfproj{C}(\nabla f^*(x))=\nabla f^*(x)$.
Hence $C$ must be a singleton.
\end{proof}

\begin{corollary} The set
$C$ is \bDK\ if and only if it is a singleton.
\end{corollary}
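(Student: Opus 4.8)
The plan is to recognize that the corollary bundles two implications, only one of which carries any mathematical content. The forward direction---if $C$ is \bDK, then $C$ is a singleton---is precisely the preceding Theorem, so I would dispose of it by a direct appeal to that result and nothing more.

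For the converse, I would argue that a singleton is trivially \bDK. Suppose $C=\{c_0\}$ with $c_0\in U$. Fix any $x\in U$. Since $c_0\in U=\intdom f$ and $x\in U$, the quantity $D(c_0,x)$ is finite, and the maximization defining $\bfproj{C}(x)=\argmax_{c\in C}D(c,x)$ is taken over the one-point set $\{c_0\}$. Hence $\bfproj{C}(x)=\{c_0\}$, which is nonempty and a singleton. As $x\in U$ was arbitrary, the defining condition of \bDK\ is met, so $C$ is \bDK.

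I anticipate no genuine obstacle here. The entire weight of the equivalence sits in the Theorem (the Br\'ezis--Haraux range-approximation argument), while the reverse implication is immediate from the definition of the farthest-point map on a one-element set. The only routine check is the finiteness of $D(c_0,x)$ for $x\in U$, which guarantees that the argmax is genuinely attained at $c_0$ rather than being vacuous; this holds because $f$ is finite and differentiable on $U$.
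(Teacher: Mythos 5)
Your proof is correct and matches the paper's (implicit) argument: the forward implication is exactly the preceding Theorem, and the converse is immediate since the argmax defining $\bfproj{C}(x)$ over a one-point set $\{c_0\}\subset U$ is attained at $c_0$, with $D(c_0,x)$ finite because $x\in U$. The paper states the corollary without proof precisely because this is the intended, essentially trivial, completion of the Theorem.
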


\section{ Subdifferentiability Properties}

\label{subdiff}

For a function $g$ that is finite and locally Lipschitz at a point
$y\in\RR^J$,
we define
the \emph{Dini subderivative} and \emph{Clarke subderivative} of $g$ at $y$
in the direction $w\in\RR^J$, denoted respectively
by $\deriv g(y)(w)$ and $\rockderiv g (y)(w)$, via
$$\deriv g(y)(w):=\varliminf_{t\downarrow 0}\frac{g(y+tw)-g(y)}{t},$$
$$\rockderiv g(y)(w):=\varlimsup_{\stackrel{x\rightarrow y}{t\downarrow
0}}\frac{g(x+tw)-g(x)}{t},$$
and the corresponding \emph{Dini subdifferential} and
\emph{Clarke subdifferential} via
$$\hat{\partial} g(y) :=\menge{y^*\in\RR^J}{(\forall w\in\RR^J)\;\;\langle y^*,w\rangle\leq \deriv g(y)(w)},$$
$$\overline{\partial} g(y) :=\menge{y^*\in\RR^J}{(\forall w\in\RR^J)\;\; \langle y^*,w\rangle\leq \rockderiv g(y)(w)}.$$
The \emph{limiting subdifferential} (see \cite[Definition~8.3]{Rock98})
is defined by
$$\partial_L g(y):=\varlimsup_{x\rightarrow y}\hat{\partial}
g(x).$$
We say that $g$ is \emph{Clarke
regular} at $y$ if $\deriv g(y)(w)=\rockderiv g(y)(w)$ for every
$w\in \RR^J$, or equivalently $\hat{\partial}
g(y)=\overline{\partial} g(y)$.
For further properties
of these subdifferentials and subderivatives,
see \cite{Frank,mordukhovich,Rock98}.

We now provide various subdifferentiability properties of $\bfD{C}$ in
terms of $\bfproj{C}$, and show that $\bfD{C}$ is {Clarke regular}.

\begin{proposition}[Clarke regularity]\label{howtofind}
Suppose that $f$ is twice continuously differentiable on $U$,
and let $y\in U$.
Then
\begin{equation}\label{regularderivative}
(\forall w\in\RR^J)\quad
\deriv \bfD{C}(y)(w)=\rockderiv
\bfD{C}(y)(w)=\max\langle \Hess
f(y)(y-\bfproj{C}(y)),w\rangle
\end{equation}
and
\begin{equation}\label{regularcase}
\partial_L \bfD{C}(y)=\hat{\partial}\bfD{C}(y)=\overline{\partial}
\bfD{C}(y)=\Hess f(y)[y-\conv \bfproj{C}(y)];
\end{equation}
consequently, $\bfD{C}$ is Clarke regular on $U$.
\end{proposition}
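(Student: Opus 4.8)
The plan is to compute the Dini subderivative directly from the definition and then show it already equals the Clarke subderivative, which simultaneously yields Clarke regularity and the subdifferential formula. Since $f$ is twice continuously differentiable, the map $y\mapsto D(c,y)=f(c)-f(y)-\langle\nabla f(y),c-y\rangle$ is itself $C^2$ in $y$, and a direct computation gives $\nabla_y D(c,y)=\Hess f(y)(y-c)$ (the first-order terms in $\nabla f(y)$ cancel against the derivative of $-\langle\nabla f(y),c-y\rangle$, leaving only the Hessian acting on $y-c$). Thus $\bfD{C}(y)=\sup_{c\in C}D(c,y)$ is a supremum, over the compact set $C$, of functions that are jointly continuous in $(c,y)$ together with their $y$-gradients. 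First I would record this gradient formula and note that, by the definition of $\bfproj{C}(y)$, the maximizers $c$ are exactly the points of $\bfproj{C}(y)$.

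Next I would invoke a Danskin-type maximum theorem for the directional derivative of a max-function. Because $C$ is compact (by \textbf{A3}), $D(c,y)$ and $\nabla_y D(c,y)$ are continuous in $(c,y)$, and the argmax set $\bfproj{C}(y)$ is nonempty and compact (Proposition~\ref{farthest}(i)--(ii)), the classical Danskin theorem gives that $\bfD{C}$ has a one-sided directional derivative in every direction $w$, equal to
\begin{equation*}
\max_{c\in\bfproj{C}(y)}\langle\nabla_y D(c,y),w\rangle
=\max_{c\in\bfproj{C}(y)}\langle\Hess f(y)(y-c),w\rangle
=\max\langle\Hess f(y)(y-\bfproj{C}(y)),w\rangle.
\end{equation*}
Since the ordinary one-sided directional derivative exists, it coincides with the Dini subderivative $\deriv\bfD{C}(y)(w)$, establishing the first equality in \eqref{regularderivative}.

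For the Clarke subderivative I would argue that, under $C^2$ smoothness, the family $\{D(c,\cdot):c\in C\}$ is locally equi-$C^1$, so $\bfD{C}$ is not merely locally Lipschitz (already known from Proposition~\ref{distance}) but its Clarke generalized directional derivative agrees with the Danskin directional derivative. Concretely, the Clarke subderivative of a max of smooth functions over a compact index set is itself $\max_{c\in\bfproj{C}(y)}\langle\nabla_y D(c,y),w\rangle$; this follows from upper semicontinuity of $\bfproj{C}$ (Proposition~\ref{farthest}(ii)) and uniform continuity of the gradients near $y$, which controls the $\varlimsup$ over $x\to y$ and prevents it from exceeding the value at $y$. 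This gives $\rockderiv\bfD{C}(y)(w)=\deriv\bfD{C}(y)(w)$, completing \eqref{regularderivative} and, by the definition recalled in the text, Clarke regularity of $\bfD{C}$ at $y$.

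Finally, to obtain \eqref{regularcase} I would dualize the support-function identity. The map $w\mapsto\max_{c\in\bfproj{C}(y)}\langle\Hess f(y)(y-c),w\rangle$ is exactly the support function of the set $\Hess f(y)[y-\conv\bfproj{C}(y)]$, which is compact and convex because $\bfproj{C}(y)$ is compact and $\Hess f(y)$ is linear. Hence the Clarke subdifferential, defined as $\{y^*:\langle y^*,w\rangle\le\rockderiv\bfD{C}(y)(w)\ \forall w\}$, equals that set; the same holds for $\hat\partial\bfD{C}(y)$ by regularity, and the limiting subdifferential $\partial_L\bfD{C}(y)$ then coincides as well since Clarke regularity forces all three to agree. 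The main obstacle I anticipate is the Clarke-subderivative step: I must justify that perturbing the base point from $y$ to nearby $x$ cannot inflate the $\varlimsup$ beyond the Danskin value, and this requires combining upper semicontinuity of the argmax with uniform ($C^1$) control of the gradients $\nabla_y D(c,\cdot)$ on a neighborhood, using compactness of $C$ to make the control uniform in $c$.
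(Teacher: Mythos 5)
Your proof is correct, but the route to Clarke regularity is genuinely different from the paper's. The two arguments share their first half: the Danskin max-formula for the one-sided directional derivative, which the paper proves by hand via two difference-quotient estimates (a lower one using a fixed farthest point of $y$, an upper one using farthest points of $y+tw$ together with the upper semicontinuity from Proposition~\ref{farthest}(ii)), and which you simply import as the classical Danskin theorem. For the Clarke half the paper never estimates $\rockderiv\bfD{C}(y)(w)$ directly: it applies the Dini formula at every point $z$ near $y$, computes the limiting subdifferential as the outer limit $\partial_L\bfD{C}(y)=\varlimsup_{z\to y}\hat{\partial}\bfD{C}(z)$ using upper semicontinuity of $\conv\bfproj{C}$ \cite[Lemma~7.12]{Phelps} and continuity of $\Hess f$, and then invokes $\overline{\partial}=\conv\partial_L$ for locally Lipschitz functions \cite[Theorem~8.49]{Rock98} to identify the Clarke subdifferential and conclude regularity. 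You instead bound the Clarke difference quotient directly --- mean value theorem at maximizers of the perturbed base points, upper semicontinuity of the argmax, and joint continuity of $(c,z)\mapsto\Hess f(z)(z-c)$ on $C\times U$ --- which is precisely the lower-$C^1$ mechanism of \cite[Definition~10.29 and Theorem~10.31]{Rock98} that the paper itself deploys later, in the proof of Theorem~\ref{christmas}. Your route avoids the Phelps lemma and Theorem~8.49 and obtains both subdifferential formulas at once by support-function duality, recovering $\partial_L$ at the end via the sandwich $\hat{\partial}\subseteq\partial_L\subseteq\overline{\partial}$ (the second inclusion requires local Lipschitzness, which Proposition~\ref{distance} supplies, so you should say so explicitly); the paper's route, by contrast, produces the $\partial_L$ formula as an intermediate step rather than as a corollary. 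One slip worth fixing: when $f$ is merely $C^2$, the map $D(c,\cdot)$ is only $C^1$, not $C^2$ --- its gradient is $\Hess f(\cdot)(\cdot-c)$, so $C^2$ would require $f\in C^3$ --- but since your argument only ever uses $C^1$ smoothness with jointly continuous gradients, nothing breaks.
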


\begin{proof}
Set $g := \bfD{C}$ and let $x\in \bfproj{C}(y)$. Fix $w\in\RR^J$ and choose $t>0$ sufficiently small so that $y+tw\in U$. Since $x\in \bfproj{C}(y)$,
we note that
\begin{align*}
g(y+tw) &\geq f(x)-f(y+tw)-\langle\nabla f(y+tw),x-(y+tw)\rangle\\
& = f(x)-f(y+tw)-\langle\nabla f(y+tw),x-y\rangle +\langle\nabla f(y+tw),tw\rangle
\end{align*}
and
$g(y) = f(x)-f(y)-\langle\nabla f(y),x-y\rangle$.
Thus
$$\frac{g(y+tw)-g(y)}{t}\geq -\frac{f(y+tw)-f(y)}{t}-\frac{\langle \nabla f(y+tw)-\nabla f(y),x-y\rangle}{t}
+\langle \nabla f(y+tw), w\rangle.$$
Taking $\varliminf_{t\downarrow 0}$, we obtain
$\deriv g(y)(w)\geq -\langle \Hess f(y)w, x-y\rangle=\langle \Hess
f(y)(y-x),w\rangle$ and this implies
\begin{equation}\label{hamilton1}
\deriv g(y)(w)\geq \max \langle \Hess f(y)(y-\bfproj{C}(y)),w\rangle.
\end{equation}
Now take $x_{t}\in \bfproj{C}(y+tw)$ and estimate
$g(y+tw)= f(x_{t})-f(y+tw)-\langle\nabla f(y+tw),x_{t}-(y+tw)\rangle$
and $g(y)\geq f(x_{t})-f(y)-\langle\nabla f(y),x_{t}-y\rangle$.
Thus
\begin{equation}\label{bcday}
\frac{g(y+tw)-g(y)}{t}\leq -\frac{f(y+tw)-f(y)}{t}-\frac{\langle\nabla f(y+tw)-\nabla f(y),x_{t}-y\rangle}{t}
+\langle\nabla f(y+tw),w\rangle.
\end{equation}
Proposition~\ref{farthest}(ii) implies that as  $t\downarrow 0$,
all cluster points of $(x_{t})_{t>0}$ lie in
$\bfproj{C}(y)$.
Take a positive sequence
$(t_{n})_\nnn$ such that $t_n\downarrow 0$ and
$$\deriv g(y)(w)=\lim_{n\rightarrow\infty}\frac{g(y+t_{n}w)-g(y)}{t_{n}}.$$
After taking a subsequence if necessary,
we also assume that
$x_{t_{n}} \to x\in\bfproj{C}(y)$.
Then \eqref{bcday} implies that for every $\nnn$,
$$\frac{g(y+t_{n}w)-g(y)}{t_{n}}\leq -\frac{f(y+t_{n}w)-f(y)}{t_{n}}-\frac{\langle\nabla f(y+t_{n}w)-\nabla f(y),x_{t_{n}}-y\rangle}{t_{n}}
+\langle\nabla f(y+t_{n}w),w\rangle.$$
Taking limits, we deduce that
\begin{align}
\deriv g(y)(w)& \leq -\langle \nabla f(y),w\rangle -\langle \Hess
f(y)w,x-y\rangle+\langle\nabla f(y),w\rangle\notag\\
& =\langle \Hess f(y)(y-x),w\rangle\leq
\max\langle \Hess f(y)(y-\bfproj{C}(y)),w\rangle.\label{hamilton2}
\end{align}
Combining
\eqref{hamilton1} and \eqref{hamilton2}, we obtain
$$(\forall  w\in\RR^J)\quad
\deriv g(y)(w)= \max\langle \Hess
f(y)(y-\bfproj{C}(y)),w\rangle,$$
from which
$$\hat{\partial} g(y)=\Hess f(y)(y-\conv\bfproj{C}(y)).$$
Since $\bfproj{C}\colon U\To C$ is upper semicontinuous and
compact-valued by Proposition~\ref{farthest}(ii), we see that
$\conv\bfproj{C}:U \To \conv C$ is also upper semicontinuous
(see, e.g.,  \cite[Lemma~7.12]{Phelps}). Invoking now the
continuity of $\Hess f$, it follows that
$\partial_{L} g(y)=\varlimsup_{z\rightarrow y}\hat{\partial} g(z)=\Hess
f(y)[y-\conv\bfproj{C}(y)]$.
Proposition~\ref{distance} shows that $g$ is locally Lipschitz on $U$.
Using \cite[Theorem~8.49]{Rock98}, we deduce that
$$\overline{\partial} g(y)=\conv \partial_L g(y)=\partial_L g(y)=\Hess
f(y)[y-\conv\bfproj{C}(y)]$$
and
$$(\forall w\in\RR^J)\quad \rockderiv g(y)(w)=\max\langle \Hess
f(y)[y-\conv\bfproj{C}(y)],w\rangle,$$
which completes the proof.
\end{proof}

\begin{corollary}\label{spain}
Suppose that $f$ is twice continuously differentiable on $U$ and that
for every $y\in U$, $\Hess f(y)$ is positive definite.
Let $y\in U$. Then the following hold.
\begin{enumerate}
\item The function $\bfD{C}$ is differentiable at $y\in U$ if and only if $\bfproj{C}(y)$ is a singleton.
\item
The set $\menge{y\in U}{\bfproj{C}(y) \text{ is a singleton}}$
is residual in $U$, and it has full Lebesgue measure.
\end{enumerate}
\end{corollary}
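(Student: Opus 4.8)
The plan is to read off both statements from the explicit subderivative and subdifferential formulas in Proposition~\ref{howtofind}, together with the positive definiteness of the Hessian. I would begin with (i). Write $g := \bfD{C}$. By Proposition~\ref{howtofind}, $g$ is locally Lipschitz, Clarke regular on $U$, and satisfies $\overline{\partial}g(y)=\Hess f(y)[y-\conv\bfproj{C}(y)]$. Since $\Hess f(y)$ is positive definite it is invertible, so the linear map $v\mapsto\Hess f(y)v$ is a bijection of $\RR^J$; consequently $\overline{\partial}g(y)$ is a singleton if and only if $y-\conv\bfproj{C}(y)$ is a singleton, i.e.\ if and only if $\conv\bfproj{C}(y)$ is a singleton. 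As $\bfproj{C}(y)$ is a nonempty compact set by Proposition~\ref{farthest}(i),(ii), its convex hull collapses to a point precisely when $\bfproj{C}(y)$ itself does. Hence $\overline{\partial}g(y)$ is a singleton exactly when $\bfproj{C}(y)$ is a singleton.

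It then remains to match differentiability of $g$ at $y$ with $\overline{\partial}g(y)$ being a singleton, and here Clarke regularity is the key. For the forward implication, if $g$ is differentiable at $y$ then $\deriv g(y)(w)=\langle\nabla g(y),w\rangle$ for every $w$, so by Clarke regularity $\rockderiv g(y)(w)=\langle\nabla g(y),w\rangle$ as well; testing the defining inequality $\langle y^*,w\rangle\le\rockderiv g(y)(w)$ with both $w$ and $-w$ forces $\overline{\partial}g(y)=\{\nabla g(y)\}$. For the converse, a singleton Clarke subdifferential of a locally Lipschitz function already forces strict, hence ordinary, differentiability, by the standard characterization (see, e.g., \cite{Frank,Rock98}). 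Combined with the previous paragraph, this gives (i).

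For (ii) I would exploit the convexity supplied by Proposition~\ref{distance}: the function $G:=\bfD{C}\circ\nabla f^*$ is convex, locally Lipschitz, and finite on all of $\RR^J$. It is classical that the set $D_G$ of differentiability points of a finite convex function on $\RR^J$ is a dense $G_\delta$ (hence residual) whose complement is Lebesgue-null (see, e.g., \cite{Rock70,Phelps}). Because $f$ is twice continuously differentiable with $\Hess f$ positive definite, $\nabla f\colon U\to\RR^J$ is a $C^1$-diffeomorphism with inverse $\nabla f^*$ (Corollary~\ref{c:isom} and the inverse function theorem). By the chain rule and part~(i), $\bfproj{C}(y)$ is a singleton if and only if $g$ is differentiable at $y$, if and only if $G$ is differentiable at $\nabla f(y)$; thus $\menge{y\in U}{\bfproj{C}(y)\text{ is a singleton}}=\nabla f^*(D_G)$. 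Since a homeomorphism maps residual sets to residual sets and a locally Lipschitz bijection maps Lebesgue-null sets to Lebesgue-null sets, $\nabla f^*(D_G)$ is residual in $U$ and has full Lebesgue measure, which proves (ii). (The full-measure claim alternatively follows at once from Rademacher's theorem applied to the locally Lipschitz function $g$.)

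The main obstacle is the equivalence in part~(i) between genuine differentiability and a singleton Clarke subdifferential: Clarke regularity must be used in an essential way, since a merely locally Lipschitz function can be differentiable at a point without being strictly differentiable there, so the two notions do not coincide in general. The positive definiteness of $\Hess f(y)$ then plays the complementary role of converting the structural description $\Hess f(y)[y-\conv\bfproj{C}(y)]$ of $\overline{\partial}g(y)$ into the clean geometric statement about $\bfproj{C}(y)$.
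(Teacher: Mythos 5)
Your proof is correct, and it splits naturally into two halves relative to the paper. For part (i) you follow essentially the paper's own argument: both proofs read off the subdifferential formula of Proposition~\ref{howtofind}, use invertibility of $\Hess f(y)$ to convert singleton-ness of the subdifferential into singleton-ness of $\conv\bfproj{C}(y)$, and invoke the standard fact that a locally Lipschitz function whose subdifferential is a singleton is strictly (hence ordinarily) differentiable; the paper phrases this with $\hat{\partial}$ and $\partial_L$ and cites \cite[Theorem~9.18(b)]{Rock98}, you phrase it with $\overline{\partial}$, but under the Clarke regularity already established these coincide. Part (ii) is where you genuinely diverge. The paper works directly with $g=\bfD{C}$ on $U$: Rademacher's theorem gives differentiability almost everywhere, and the Loewen--Wang result \cite[Theorem~10]{loewen} on generic differentiability of Clarke-regular locally Lipschitz functions gives residuality, after which part (i) translates both statements into single-valuedness of $\bfproj{C}$. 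You instead exploit the convexity of $G=\bfD{C}\circ\nabla f^*$ supplied by Proposition~\ref{distance}: the classical results for finite convex functions on $\RR^J$ (Rockafellar, Mazur/Phelps) make the differentiability set $D_G$ a dense $G_\delta$ with Lebesgue-null complement, and you transport this back to $U$ through $\nabla f$, which is a $C^1$-diffeomorphism by the inverse function theorem since $\Hess f$ is continuous and positive definite, using that homeomorphisms preserve residuality and that locally Lipschitz bijections preserve null sets. Your route replaces the comparatively specialized nonsmooth-analysis genericity theorem by classical convex analysis, which is arguably more self-contained and nicely reuses the hidden convexity the paper itself established in \eqref{newconjugate}; the price is the extra transport machinery (inverse function theorem plus the two preservation facts), which the paper's shorter argument avoids entirely by staying on $U$.
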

\begin{proof}
(i): Assume first that $\bfD{C}$ is differentiable at $y\in U$. Then
$\hat{\partial} \bfD{C}(y)=\{\nabla \bfD{C}(y)\}$, and
Proposition~\ref{howtofind} yields
$$\nabla \bfD{C}(y)=\Hess f(y)[y-\conv \bfproj{C}(y)].$$
Since $\Hess f(y)$ is invertible,
$$\conv \bfproj{C}(y)=y-\Hess f(y)^{-1}\nabla \bfD{C}(y);$$
thus, $\bfproj{C}(y)$ must be a singleton.
Conversely, assume that $\bfproj{C}(y)$ is a singleton. Apply
Proposition~\ref{howtofind} to deduce that the limiting
subdifferential $\partial_L \bfD{C}(y)$ is a singleton. This implies that
$\bfD{C}$ is strictly differentiable at $y$ (see \cite[Theorem~9.18(b)]{Rock98})
and hence differentiable at $y$.

(ii): Since $\bfD{C}$ is locally Lipschitz on $U$
(see Proposition~\ref{distance}),
Rademacher's Theorem
(see \cite[Theorem~9.1.2]{lewis} or \cite[Corollary~3.4.19]{Yuri})
guarantees that $\bfD{C}$
is differentiable almost everywhere on $U$.
Moreover, since $\bfD{C}$ is Clarke regular
on $U$ (see Proposition~\ref{howtofind}),
we use \cite[Theorem~10]{loewen} to deduce that
$\bfD{C}$ is generically differentiable on $U$.
The result now follows from (i).
\end{proof}

\section{Characterizations}\label{completechris}

In this section, we give complete characterizations of sets with
unique farthest-point properties.
To do so, we need the following two key results on
expressing the convex-analytical subdifferential of the function
$-f^{\vee}+\iota_{-C}$ (see also \eqref{starting})
and of the conjugate $(-f^{\vee}+\iota_{-C})^*$ in terms of
$\bfproj{C}\circ \nabla f^*$.
These results
extend Hiriart-Urruty's
\cite[Proposition~4.4 and Corollary~4.5]{urruty2} to the framework
of Bregman distances.

\begin{lemma}\label{-f+}
Let $x\in -C$.  Then
$\partial (-f^{\vee}+\iota_{-C})(x) =
(\bfproj{C}\circ \nabla f^*)^{-1}(-x)$.
\end{lemma}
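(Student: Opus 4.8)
The plan is to compute the convex-analytical subdifferential $\partial h(x)$ of $h := -f^{\vee}+\iota_{-C}$ via the Fenchel--Young \emph{equality}, rather than through subdifferential inversion. The point to keep in mind throughout is that $h$ is \emph{not} convex, since $-f^{\vee}\colon x\mapsto -f(-x)$ is concave; consequently I cannot invoke the equivalence $x^*\in\partial h(x)\Leftrightarrow x\in\partial h^*(x^*)$, which would require $h=h^{**}$. Instead I rely on the fact, valid for \emph{any} proper function $h$ together with the convex subdifferential, that
\[
x^*\in\partial h(x)\quad\Leftrightarrow\quad h(x)+h^*(x^*)=\scal{x^*}{x}.
\]
Indeed, $x^*\in\partial h(x)$ means $\scal{x^*}{z}-h(z)\le\scal{x^*}{x}-h(x)$ for every $z$, and taking the supremum over $z$ turns this into the displayed equality, in conjunction with the always-valid inequality $h^*(x^*)\ge\scal{x^*}{x}-h(x)$. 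This is exactly the place where non-convexity of $h$ is harmless.

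With this characterization in hand, I would substitute the formula $h^*=\bfD{C}\circ\nabla f^*-f^*$ already established in \eqref{newconjugate}. Writing $y:=\nabla f^*(x^*)$ and using $x\in -C$ (so that $h(x)=-f(-x)$), the Fenchel--Young equality becomes
\[
-f(-x)+\bfD{C}(y)-f^*(x^*)=\scal{x^*}{x}.
\]
Next I would eliminate $f^*(x^*)$: by Corollary~\ref{c:isom} one has $x^*=\nabla f(y)$, so Fenchel--Young holds with equality for $f$, giving $f^*(x^*)=\scal{\nabla f(y)}{y}-f(y)$. A routine rearrangement then collapses the condition to
\[
\bfD{C}(y)=f(-x)-f(y)+\scal{\nabla f(y)}{x+y}=D(-x,y),
\]
where the last equality is just the definition \eqref{eq:D} of the Bregman distance evaluated at $(-x,y)$.

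To finish, I would observe that since $x\in -C$ we have $-x\in C$, and by definition $\bfD{C}(y)=\sup_{c\in C}D(c,y)\ge D(-x,y)$. Hence the equality $\bfD{C}(y)=D(-x,y)$ holds precisely when $-x$ attains the supremum, i.e.\ when $-x\in\bfproj{C}(y)=\bfproj{C}(\nabla f^*(x^*))$, which is by definition $x^*\in(\bfproj{C}\circ\nabla f^*)^{-1}(-x)$. This is exactly the asserted identity. The only genuine obstacle is the conceptual one flagged at the outset --- resisting the temptation to invert the subdifferential through $h^*$ when $h$ fails to be convex --- after which the argument reduces to a short computation driven entirely by \eqref{newconjugate}, Corollary~\ref{c:isom}, and the definition of $\bfproj{C}$.
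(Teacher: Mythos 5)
Your proof is correct and follows essentially the same route as the paper: the paper also characterizes $\partial(-f^{\vee}+\iota_{-C})(x)$ via the Fenchel--Young equality (citing Hiriart-Urruty--Lemar\'echal for that characterization, which as you note is valid without convexity), substitutes \eqref{newconjugate}, and rearranges to the condition $\bfD{C}(\nabla f^*(s))=D(-x,\nabla f^*(s))$, i.e.\ $-x\in\bfproj{C}(\nabla f^*(s))$. Your write-up merely makes explicit the elimination of $f^*$ via Corollary~\ref{c:isom} and the attainment argument that the paper leaves implicit.
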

\begin{proof}
Let $s\in\RR^J$.
By \cite[Theorem~X.1.4.1]{urruty1},
$s\in \partial(-f^{\vee}+\iota_{-C})(x)$ if and only if
\begin{equation}\label{thursdaynow}
-f(-x)+\iota_{-C}(x)+(-f^{\vee}+\iota_{-C})^*(s)=\langle s,x\rangle.
\end{equation}
In view of \eqref{newconjugate}, equation~\eqref{thursdaynow}
is equivalent to
$-f(-x)+(\bfD{C}\circ\nabla f^*)(s)-f^*(s)=\langle x,s\rangle$,
and hence to
$\bfD{C}(\nabla f^*(s))= f(-x)+f^*(s)+\langle x,s\rangle
=D(-x,\nabla f^*(s))$, i.e., to
$-x\in\bfproj{C}(\nabla f^*(s))$.
\end{proof}

\begin{lemma}\label{tommy}
We have
$\partial (-f^{\vee}+\iota_{-C})^*=-\conv (\bfproj{C} \circ\nabla f^*)$.
\end{lemma}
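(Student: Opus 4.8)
The plan is to read off the convex subdifferential of the conjugate $g^{*}:=(-f^{\vee}+\iota_{-C})^{*}$ as the convex hull of the set of maximizers in the conjugation, and then to translate that maximizer set through Lemma~\ref{-f+}. Write $g:=-f^{\vee}+\iota_{-C}$. As recorded in the proof of Proposition~\ref{distance}, the standing assumptions \textbf{A1}--\textbf{A3} force $g$ to be proper, lower semicontinuous and $1$-coercive, with $\dom g=-C$; hence $g^{*}$ is finite-valued and locally Lipschitz on all of $\RR^J$ (this is also visible from \eqref{newconjugate}), and for each $s\in\RR^J$ the supremum defining
\[
g^{*}(s)=\sup_{x}\big(\langle s,x\rangle-g(x)\big)
\]
is attained over a nonempty set, which I denote $M(s):=\argmax_{x}\big(\langle s,x\rangle-g(x)\big)\subseteq-C$.

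First I would record the Fenchel--Young description of this maximizer set: since $g(x)+g^{*}(s)\geq\langle s,x\rangle$ always, with equality precisely when $s\in\partial g(x)$, one has $M(s)=\menge{x\in\RR^J}{s\in\partial g(x)}$. Feeding Lemma~\ref{-f+} into this yields, for $x\in-C$, the equivalence $s\in\partial g(x)\Leftrightarrow -x\in\bfproj{C}(\nabla f^{*}(s))$, whence
\[
M(s)=-\big(\bfproj{C}\circ\nabla f^{*}\big)(s).
\]
Because $\bfproj{C}$ is compact-valued (Proposition~\ref{farthest}(ii)) and $\nabla f^{*}$ is continuous (Corollary~\ref{c:isom}), the set $M(s)$ is compact, so $\conv M(s)$ is itself compact and therefore closed; this spares us from having to track a closure in what follows.

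The substantive step is the identity $\partial g^{*}(s)=\conv M(s)$. The inclusion ``$\supseteq$'' is the easy half: if $x\in M(s)$ then $g(x)+g^{*}(s)=\langle s,x\rangle$, and since $g^{**}\leq g$ forces $g^{**}(x)+g^{*}(s)=\langle s,x\rangle$ as well, we obtain $s\in\partial g^{**}(x)$, i.e.\ $x\in\partial g^{*}(s)$; convexity of $\partial g^{*}(s)$ then gives $\conv M(s)\subseteq\partial g^{*}(s)$. The reverse inclusion ``$\subseteq$'' is where the real work lies, and I expect it to be the main obstacle: an arbitrary $x\in\partial g^{*}(s)$ only satisfies $s\in\partial g^{**}(x)$ a priori, so one must upgrade this to exhibit $x$ as a convex combination of genuine maximizers of $\langle s,\cdot\rangle-g$ rather than merely of $\langle s,\cdot\rangle-g^{**}$. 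I would dispatch this by invoking the standard conjugacy result for $1$-coercive functions (in the spirit of \cite[Chapter~X]{urruty1}) that identifies $\partial g^{*}(s)$ with the convex hull of $M(s)$; alternatively, one argues directly that $g^{**}=\cconv g$ is realized at $x$ by a Carath\'eodory-type convex combination of values of $g$ at points of $M(s)$, the Fenchel--Young equality pinning each such point into $M(s)$.

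Combining the two inclusions gives, for every $s\in\RR^J$,
\[
\partial g^{*}(s)=\conv M(s)=-\conv\big((\bfproj{C}\circ\nabla f^{*})(s)\big),
\]
which is exactly the asserted operator identity $\partial(-f^{\vee}+\iota_{-C})^{*}=-\conv(\bfproj{C}\circ\nabla f^{*})$.
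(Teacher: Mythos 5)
Your proposal is correct and follows essentially the same route as the paper: both arguments reduce the claim, via Lemma~\ref{-f+} and the Fenchel--Young equality, to the fact that $\partial(-f^{\vee}+\iota_{-C})^{*}(s)$ equals the convex hull of the points $x$ at which $s\in\partial(-f^{\vee}+\iota_{-C})(x)$. The paper phrases this key fact on the primal side, as the characterization of $\partial\conv(-f^{\vee}+\iota_{-C})$ from \cite[Theorem~X.1.5.6]{urruty1} together with $\cconv(-f^{\vee}+\iota_{-C})=\conv(-f^{\vee}+\iota_{-C})$, whereas you phrase it on the dual side as $\partial g^{*}(s)=\conv M(s)$ (in your notation) and justify it by the same Hiriart-Urruty--Lemar\'echal machinery or an equivalent Carath\'eodory argument --- the same mathematics in different bookkeeping.
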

\begin{proof}
Let $x$ and $s$ be in $\RR^J$.
By \cite[Lemma~X.1.5.3]{urruty1} or \cite[Corollary~3.47]{Rock98},
\begin{equation*}\label{calculus}
\cconv(-f^{\vee}+\iota_{-C}) =\conv (-f^{\vee}+\iota_{-C}).
\end{equation*}
On the other hand,
$$x\in \partial (-f^{\vee}+\iota_{-C})^{*}(s) \quad \Leftrightarrow \quad
s\in \partial (-f^{\vee}+\iota_{-C})^{**}(x)=\partial\, \cconv (-f^{\vee}+\iota_{-C})(x).$$
Altogether,
\begin{equation}\label{e:dienstag:a-}
x\in \partial (-f^{\vee}+\iota_{-C})^{*}(s)
\quad \Leftrightarrow \quad s \in \partial \conv (-f^{\vee}+\iota_{-C})(x).
\end{equation}
Now by \cite[Theorem~X.1.5.6]{urruty1},
$s \in \partial \conv (-f^{\vee}+\iota_{-C})(x)$ if and only if there
there exists nonnegative real numbers
$\lambda_{1},\ldots,\lambda_{J+1}$ and points
$x_1,\ldots,x_{J+1}$ in $\RR^J$
such that
\begin{equation*}
\sum_{j=1}^{J+1}\lambda_{j}=1,
x=\sum_{j=1}^{J+1}\lambda_{j}x_{j}\quad\text{and}\quad
s\in \bigcap_{j\colon \lambda_j>0} \partial (-f^{\vee}+\iota_{-C})(x_{j});
\end{equation*}
furthermore, Lemma~\ref{-f+} shows that
$s\in \partial (-f^{\vee}+\iota_{-C})(x_{j})$ $\Leftrightarrow$
$x_{j}\in -(\bfproj{C}\circ \nabla f^*)(s)$.
Therefore, the two conditions of \eqref{e:dienstag:a-} are also
equivalent to
$x\in -\sum_{j=1}^{J+1}\lambda_{j}(\bfproj{C}\circ \nabla f^*)(s)$.
\end{proof}

\begin{remark}
When $f=\thalb\|\cdot\|^2$ is the energy
(see Example~\ref{ex:dienstag}\ref{ex:dienstag:energy}), then
\eqref{newconjugate} turns into
$$(-f^{\vee}+\iota_{-C})^*=\thalb\Delta_{C}^2-\thalb\|\cdot\|^2,$$
where $\Delta_{C}\colon x\mapsto \sup\|x-C\|$.
In this case, the conclusion of Lemma~\ref{tommy} is classic;
see \cite[pages~262--264]{urruty3}
and \cite[Theorem~4.3]{urruty2}.
\end{remark}

We need the following result from \cite{solo}
(see also \cite[Section~3.9]{Zali}).

\begin{fact} [Soloviov]\label{vlad} Let $g:\RR^J\rightarrow\RX$ be lower semicontinuous,
and $g^*$ be essentially smooth. Then
$g$ is convex.
\end{fact}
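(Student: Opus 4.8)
The plan is to show that $g$ coincides with its biconjugate $g^{**}$; since $g^{**}$ is automatically convex and lower semicontinuous, this finishes the proof. Because $g^*$ is essentially smooth it is in particular a proper convex function with $\inte\dom g^*\neq\emp$, so $g$ admits a continuous affine minorant and hence $g^{**}\leq g$ everywhere. It therefore remains to establish the reverse inequality $g\leq g^{**}$. Where $g^{**}(x)=\pinf$ this is trivial, since $g\geq g^{**}$ forces $g(x)=\pinf$; so only the points of $\dom g^{**}$ require attention.

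The heart of the argument is to prove $g=g^{**}$ on $\reli\dom g^{**}$, and here I would exploit the single-valuedness of $\partial g^*$ guaranteed by essential smoothness. For $x\in\reli\dom g^{**}$ one has $\partial g^{**}(x)\neq\emp$; pick $y\in\partial g^{**}(x)$, so that $x\in\partial g^*(y)$. Essential smoothness then forces $y\in\inte\dom g^*$ and $x=\nabla g^*(y)$, and the Fenchel--Young identity gives $g^{**}(x)+g^*(y)=\scal{x}{y}$. Now choose a maximizing sequence $(z_n)$ for $g^*(y)=\sup_z[\scal{y}{z}-g(z)]$. The key lemma is that differentiability of $g^*$ at $y$ forces $z_n\to x$: testing $g^*(y+td)\geq\scal{y+td}{z_n}-g(z_n)$ against an arbitrary direction $d$, dividing by $t>0$, and letting $t\downarrow0$ yields $\scal{\nabla g^*(y)}{d}\geq\varlimsup_n\scal{d}{z_n}$; replacing $d$ by $-d$ gives the matching lower bound, so $\scal{d}{z_n}\to\scal{x}{d}$ for every $d$ and hence $z_n\to x$. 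Consequently $g(z_n)=\scal{y}{z_n}-[\scal{y}{z_n}-g(z_n)]\to\scal{x}{y}-g^*(y)=g^{**}(x)$, and lower semicontinuity of $g$ gives $g(x)\leq\varliminf_n g(z_n)=g^{**}(x)$, as required.

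Finally I would pass from $\reli\dom g^{**}$ to its relative boundary by a segment argument. For $x_0\in\dom g^{**}\setminus\reli\dom g^{**}$, fix $x_1\in\reli\dom g^{**}$ and set $x_t=(1-t)x_0+tx_1$; for $0<t\leq1$ these points lie in $\reli\dom g^{**}$, so $g(x_t)=g^{**}(x_t)$ by the previous step, while convexity of $g^{**}$ gives $g^{**}(x_t)\to g^{**}(x_0)$ as $t\downarrow0$. Since $x_t\to x_0$, lower semicontinuity of $g$ yields $g(x_0)\leq\varliminf_{t\downarrow0}g(x_t)=g^{**}(x_0)$, establishing $g\leq g^{**}$ in full. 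I expect the main obstacle to be the maximizing-sequence lemma in the middle step: one must use genuine differentiability of $g^*$ at $y$ (not merely a one-directional derivative) to pin the limit of $(z_n)$ down to the single point $x=\nabla g^*(y)$, and it is worth noting that no a priori boundedness of $(z_n)$ is needed, since the two-sided directional estimate delivers coordinatewise convergence directly.
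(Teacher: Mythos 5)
The paper gives no proof of this statement at all: it is imported as a ``Fact'' with the argument deferred to Soloviov \cite{solo} and to \cite[Section~3.9]{Zali}, so there is no internal proof to compare yours against. Judged on its own, your argument is correct, and it is the natural finite-dimensional route: prove $g=g^{**}$ and read off convexity. Each ingredient checks out. Since $g^*$ is essentially smooth it is proper convex, hence $g$ is proper and $g^{**}\leq g$; on $\reli\dom g^{**}$ the subdifferential $\partial g^{**}(x)$ is nonempty (Rockafellar, Theorem~23.4, using that $g^{**}=(g^*)^*$ is proper), and $y\in\partial g^{**}(x)$ is equivalent to $x\in\partial g^*(y)$ because $g^{***}=g^*$; essential smoothness (empty subdifferential off $\inte\dom g^*$, gradient singleton on it, Rockafellar, Theorem~26.1) then pins down $x=\nabla g^*(y)$; your maximizing-sequence lemma correctly converts differentiability of $g^*$ at $y$ into $z_n\to x$ without any boundedness hypothesis, and lower semicontinuity of $g$ gives $g(x)\leq\lim_n g(z_n)=\scal{x}{y}-g^*(y)=g^{**}(x)$; finally the line-segment argument reaches the relative-boundary points, where in fact only the convexity estimate $\varlimsup_{t\downarrow 0}g^{**}(x_t)\leq g^{**}(x_0)$ is needed, not the full convergence you assert (which also holds, by adding lower semicontinuity of $g^{**}$). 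What your proof buys over the paper's citation is self-containedness, using only standard results from \cite{Rock70}.

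One step deserves more care in the write-up. With $\epsilon_n:=g^*(y)-\bigl[\scal{y}{z_n}-g(z_n)\bigr]\geq 0$, your test inequality rearranges to $\scal{d}{z_n}\leq\bigl(g^*(y+td)-g^*(y)\bigr)/t+\epsilon_n/t$. The phrase ``dividing by $t>0$ and letting $t\downarrow 0$'' read literally (with $n$ fixed) makes the error term $\epsilon_n/t$ blow up and yields nothing; the correct order is to fix $t>0$, pass to $\varlimsup_n$ (which kills $\epsilon_n/t$), and only then let $t\downarrow 0$, using that the convex difference quotient decreases to $\scal{\nabla g^*(y)}{d}$. Since the conclusion you state, $\scal{\nabla g^*(y)}{d}\geq\varlimsup_n\scal{d}{z_n}$, carries the $\varlimsup_n$ inside, the intended order is recoverable and the lemma is true as stated; but this interchange is the only genuinely delicate point of the whole proof and should be made explicit.
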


We are now ready for the main result of this section.
\begin{theorem}[Characterizations of \bDK\ sets] \label{great}
The following are equivalent.
\begin{enumerate}
\item \label{freitag:1} $C$ is \bDK, i.e.,
$\bfproj{C}$ is a single-valued on $U$.
\item \label{freitag:2} $\bfproj{C}$ is single-valued and continuous on $U$.
\item \label{freitag:3} $\bfD{C}\circ\nabla f^*$ is continuously differentiable on $\RR^J$.
\item \label{freitag:4} $-f^{\vee}+\iota_{-C}$ is convex.
\item \label{freitag:5}$C$ is a singleton.
\suspend{enumerate}
If {\rm \ref{freitag:1}--\ref{freitag:5}} hold, then
\begin{equation}\label{west1}
\nabla (\bfD{C}\circ \nabla f^*)=\nabla f^*-\bfproj{C}\circ\nabla f^*.
\end{equation}
If $f$ is twice continuously differentiable and
the Hessian $\Hess f(y)$ is positive definite for every $y\in U$,
then {\rm \ref{freitag:1}--\ref{freitag:5}} are also equivalent to
\resume{enumerate}
\item \label{freitag:6} $\bfD{C}$ is differentiable on $U$,
\end{enumerate}
in which case $\bfD{C}$ is actually continuously differentiable on $U$ with
\begin{equation}\label{west2}
(\forall y\in U)\quad \nabla
\bfD{C}(y)=\Hess f(y)\big(y-\bfproj{C}(y)\big).
\end{equation}
\end{theorem}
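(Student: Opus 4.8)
The plan is to prove the equivalence of (i)--(v) by running the single cycle (i)$\Rightarrow$(ii)$\Rightarrow$(iii)$\Rightarrow$(iv)$\Rightarrow$(v)$\Rightarrow$(i), harvesting the gradient formula \eqref{west1} en route. The two endpoints are quick: (i)$\Rightarrow$(ii) is Proposition~\ref{p:montag:f}, while (v)$\Rightarrow$(i) is immediate because $C=\{c\}$ forces $\bfproj{C}(y)=\{c\}$ for every $y\in U$. The substance lies in the three middle implications, which link single-valuedness of $\bfproj{C}$, smoothness of $\bfD{C}\circ\nabla f^*$, and convexity of $-f^{\vee}+\iota_{-C}$.

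For (ii)$\Rightarrow$(iii) I would use the conjugate identity \eqref{newconjugate} to write $\bfD{C}\circ\nabla f^*=(-f^{\vee}+\iota_{-C})^*+f^*$, a sum in which $f^*$ is $C^1$ (its gradient $\nabla f^*$ is continuous by Corollary~\ref{c:isom}). By Lemma~\ref{tommy}, $\partial(-f^{\vee}+\iota_{-C})^*=-\conv(\bfproj{C}\circ\nabla f^*)$; if $\bfproj{C}$ is single-valued and continuous then so is $\bfproj{C}\circ\nabla f^*$, and passing to convex hulls changes nothing, so this subdifferential is the single-valued continuous map $-\bfproj{C}\circ\nabla f^*$. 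Consequently $(-f^{\vee}+\iota_{-C})^*$ is continuously differentiable with that gradient, and so is $\bfD{C}\circ\nabla f^*$, giving
\[
\nabla\big(\bfD{C}\circ\nabla f^*\big)=\nabla f^*-\bfproj{C}\circ\nabla f^*.
\]
Since the cycle makes (i)--(v) equivalent, this identity is exactly \eqref{west1} and holds whenever (i)--(v) do.

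The implication (iii)$\Rightarrow$(iv) is where I expect the main obstacle, and it is the reason Soloviov's Fact~\ref{vlad} was recorded. Reading \eqref{newconjugate} in reverse, $(-f^{\vee}+\iota_{-C})^*=\bfD{C}\circ\nabla f^*-f^*$ has full domain $\RR^J$ by Proposition~\ref{distance}; if $\bfD{C}\circ\nabla f^*$ is differentiable everywhere, then so is $(-f^{\vee}+\iota_{-C})^*$, and the delicate point is that, with full domain, everywhere-differentiability already amounts to essential smoothness (there is no boundary of the domain at which a gradient could fail to blow up). Since $g:=-f^{\vee}+\iota_{-C}$ is lower semicontinuous (as observed around \eqref{starting}), Fact~\ref{vlad} applies with $g^*$ essentially smooth and yields convexity of $g$, i.e.\ (iv). Then (iv)$\Rightarrow$(v) is elementary: convexity of $g$ forces $-C$, hence $C$, to be convex, and for $c_1,c_2\in C$ and $\lambda\in[0,1]$ the convexity inequality rearranges to $f(\lambda c_1+(1-\lambda)c_2)\geq\lambda f(c_1)+(1-\lambda)f(c_2)$; because $f$ is essentially strictly convex (\textbf{A1}) and $C\subseteq U$, this contradicts strict convexity unless $c_1=c_2$, so $C$ is a singleton.

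Finally, under the additional hypothesis that $f$ is twice continuously differentiable with $\Hess f(y)$ positive definite throughout $U$, I would append (vi). Here (i)$\Leftrightarrow$(vi) is Corollary~\ref{spain}(i): $\bfD{C}$ is differentiable at $y$ precisely when $\bfproj{C}(y)$ is a singleton, so differentiability on all of $U$ coincides with single-valuedness of $\bfproj{C}$. For the closing formula \eqref{west2}, I would specialize the Clarke-subdifferential formula \eqref{regularcase} of Proposition~\ref{howtofind}: when $\bfproj{C}(y)$ is a singleton, $\conv\bfproj{C}(y)=\bfproj{C}(y)$, giving $\nabla\bfD{C}(y)=\Hess f(y)\big(y-\bfproj{C}(y)\big)$; continuity of the right-hand side — hence continuous differentiability of $\bfD{C}$ — follows from continuity of $\bfproj{C}$ (condition (ii)) and of $\Hess f$.
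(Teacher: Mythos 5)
Your proposal is correct and follows essentially the same route as the paper's proof: the same cycle (i)$\Rightarrow$(ii)$\Rightarrow$(iii)$\Rightarrow$(iv)$\Rightarrow$(v)$\Rightarrow$(i), using \eqref{newconjugate} with Lemma~\ref{tommy} for (ii)$\Rightarrow$(iii) and \eqref{west1}, Soloviov's Fact~\ref{vlad} for (iii)$\Rightarrow$(iv), the strict-convexity contradiction for (iv)$\Rightarrow$(v), and Corollary~\ref{spain}(i) together with \eqref{regularcase} for the statement about (vi) and \eqref{west2}. The only differences are cosmetic: you cite Proposition~\ref{p:montag:f} rather than Proposition~\ref{farthest}(ii) for (i)$\Rightarrow$(ii) (the former is proved from the latter), and you spell out the essential-smoothness point behind the application of Fact~\ref{vlad}, which the paper leaves implicit.
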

\begin{proof}
``\ref{freitag:1}$\Rightarrow$\ref{freitag:2}'':
Apply Proposition~\ref{farthest}(ii).
``\ref{freitag:2}$\Rightarrow$\ref{freitag:3}'':
On the one hand, \eqref{newconjugate} implies
\begin{equation}\label{west3}
\bfD{C}\circ\nabla f^*=(-f^{\vee}+\iota_{-C})^*+f^*.
\end{equation}
On the other hand, Lemma~\ref{tommy} yields
\begin{equation} \label{west3a}
\nabla (-f^{\vee}+\iota_{-C})^*=-\bfproj{C} \circ\nabla f^*.
\end{equation}
Combining \eqref{west3} and \eqref{west3a}, we obtain
altogether \ref{freitag:3}, and also \eqref{west1}.
``\ref{freitag:3}$\Rightarrow$\ref{freitag:4}'':
This follows from \eqref{newconjugate} and Fact~\ref{vlad}.
``\ref{freitag:4}$\Rightarrow$\ref{freitag:5}'':
Assume to the contrary that $C$ is not a singleton,
fix two distinct points $y_0$ and $y_1$ in $C$, and
$t\in\RR$ with $0<t<1$.
Set $y_t := (1-t)y_0+ty_1$. Since
$-f^{\vee}+\iota_{-C}$ is a convex function,
its domain $-C$ is a convex set. Hence $y_t\in C$
and $-f(y_t)\leq -(1-t)f(y_0)-tf(y_1)$.
However, since $f$ is strictly convex, the last inequality is impossible.
Therefore, $C$ is a singleton.
``\ref{freitag:5}$\Rightarrow$\ref{freitag:1}'': This is obvious.

Finally, we assume that $f$ is twice differentiable on $U$ and that
the $\Hess f(y)$ is invertible, for every $y\in U$.
The equivalence of \ref{freitag:1} and \ref{freitag:6} follows from
Corollary~\ref{spain}(i), and \eqref{regularcase} yields the formula for
the gradient \eqref{west2}, which is continuous by \ref{freitag:2}.
\end{proof}

\begin{theorem}\label{christmas}
Set
\begin{equation}
\theta_{C}: \RR^J\to \RX\colon  x\mapsto \inf_{c\in C}\big(f(x+c)-f(c)\big).
\end{equation}
Then $\theta_C$ is proper, lower semicontinuous,
\begin{equation}\label{neededmost}
\theta_{C}=f\Box \big(-f^{\vee}+\iota_{-C}\big),
\end{equation}
where this infimal convolution is exact at every point in
$\dom \theta_{C}=\dom f-C$, and
\begin{equation}\label{isarel}
\theta_{C}^*= \bfD{C}\circ\nabla f^*.
\end{equation}
Moreover,
\begin{equation} \label{thetafunction}
\text{ $\theta_{C}$ is convex
$\;\;\Leftrightarrow\;\;$ $C$ is a singleton.}
\end{equation}
\end{theorem}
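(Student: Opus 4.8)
The plan is to establish the four assertions of Theorem~\ref{christmas} in sequence, reusing the machinery already developed for $-f^\vee+\iota_{-C}$ and its conjugate. First I would prove the infimal-convolution formula \eqref{neededmost}. Writing out the definition of the infimal convolution,
\begin{equation*}
\big(f\Box(-f^\vee+\iota_{-C})\big)(x)=\inf_{z}\big(f(z)+(-f^\vee+\iota_{-C})(x-z)\big)
=\inf_{c\in C}\inf_{z}\big(f(z)-f(c)\big)\big|_{x-z=-c},
\end{equation*}
and substituting $z=x+c$ (so that $x-z=-c\in-C$) collapses the inner infimum, yielding exactly $\inf_{c\in C}\big(f(x+c)-f(c)\big)=\theta_C(x)$. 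The same substitution shows that for each fixed $c\in C$ the infimum defining the infimal convolution is attained at $z=x+c$, so the convolution is exact at every $x$ for which some $c\in C$ gives a finite value; that is precisely $x\in\dom f-C$, which is therefore $\dom\theta_C$.

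Next I would compute the conjugate \eqref{isarel}. Because the infimal convolution of two functions conjugates to the sum of their conjugates, \eqref{neededmost} gives $\theta_C^*=f^*+(-f^\vee+\iota_{-C})^*$. Invoking \eqref{newconjugate} from Proposition~\ref{distance}, which states $(-f^\vee+\iota_{-C})^*=\bfD{C}\circ\nabla f^*-f^*$, the two copies of $f^*$ cancel and I obtain $\theta_C^*=\bfD{C}\circ\nabla f^*$, as claimed. For the properness and lower semicontinuity of $\theta_C$: properness follows since $\dom\theta_C=\dom f-C\neq\emp$ and $\theta_C>\minf$ (the latter because $\theta_C^*$ is proper, being a locally Lipschitz real-valued function by Proposition~\ref{distance}); lower semicontinuity I would get by noting $\theta_C=\theta_C^{**}$ would force convexity, so instead I would argue directly that $\theta_C$ is the exact infimal convolution of the lower semicontinuous, $1$-coercive function $-f^\vee+\iota_{-C}$ (see \eqref{starting}) with the lower semicontinuous $f$, and that such an exact infimal convolution of a coercive summand is lower semicontinuous.

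Finally I would prove the equivalence \eqref{thetafunction}. The implication that $C$ a singleton forces $\theta_C$ convex is immediate: if $C=\{c\}$ then $\theta_C=f(\cdot+c)-f(c)$ is a translate of the convex function $f$. For the converse I would use Fact~\ref{vlad} (Soloviov's theorem) in combination with Theorem~\ref{great}. Indeed, if $\theta_C$ is convex, then since $\theta_C$ is lower semicontinuous and proper we have $\theta_C=\theta_C^{**}=(\theta_C^*)^*=(\bfD{C}\circ\nabla f^*)^*$, so $\theta_C^*=\bfD{C}\circ\nabla f^*$ is convex with convex conjugate $\theta_C$; more directly, convexity of $\theta_C$ makes $\theta_C$ equal to its biconjugate, and then \eqref{isarel} together with \eqref{newconjugate} feeds back into the chain of equivalences in Theorem~\ref{great} --- specifically one recovers condition \ref{freitag:4}, that $-f^\vee+\iota_{-C}$ is convex, whence $C$ is a singleton by ``\ref{freitag:4}$\Rightarrow$\ref{freitag:5}''.

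The main obstacle I anticipate is the lower-semicontinuity and properness bookkeeping in the first and second paragraphs: one must be careful that the infimal convolution \eqref{neededmost} is genuinely \emph{exact} (so that the conjugation identity and the domain description are valid) rather than merely an inequality, and that $\theta_C$ does not take the value $\minf$. The exactness is what the substitution argument secures, and the $1$-coercivity of $-f^\vee+\iota_{-C}$ coming from \textbf{A2} is the structural fact that both prevents $\minf$ values and delivers lower semicontinuity; I would make sure to invoke coercivity explicitly rather than treat these regularity points as automatic.
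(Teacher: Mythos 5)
Your opening two paragraphs are essentially correct and track the paper's own route: the substitution argument for \eqref{neededmost}, the rule $(f\Box g)^*=f^*+g^*$ combined with \eqref{newconjugate} to get \eqref{isarel}, and properness/lower semicontinuity via the boundedness of $-C$. The fatal gap is in the hard direction of \eqref{thetafunction}, ``$\theta_C$ convex $\Rightarrow$ $C$ singleton''. You assert that convexity of $\theta_C$, through $\theta_C=\theta_C^{**}$ and \eqref{isarel}--\eqref{newconjugate}, ``recovers condition \ref{freitag:4} of Theorem~\ref{great}'', i.e.\ convexity of $-f^{\vee}+\iota_{-C}$, but no argument is given, and the conjugacy identities you invoke cannot supply one: conjugation is blind to the difference between $-f^{\vee}+\iota_{-C}$ and its closed convex hull. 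All that $\theta_C=\theta_C^{**}$ yields is
\begin{equation*}
f\Box\big(-f^{\vee}+\iota_{-C}\big)
=\closu\Big(f\Box\,\cconv\big(-f^{\vee}+\iota_{-C}\big)\Big),
\end{equation*}
and ``$f\Box g$ convex'' does not in general imply ``$g$ convex'' --- infimal convolution can convexify. So the crux of the theorem (the Bregman analogue of Hiriart-Urruty's Proposition~4.2) is skipped. The paper's proof of this direction is a genuine nonsmooth-analysis argument of which your proposal contains no trace: (a) exactness of \eqref{neededmost} together with essential smoothness of $f$ shows that any $x^*\in\partial\theta_C(x)$ equals $\nabla f(x+\bar{c})$ for a point $\bar{c}\in C$ realizing $\theta_C(x)$, so $\partial\theta_C$ is single-valued wherever nonempty; by \cite[Theorem~26.1]{Rock70} the convex function $\theta_C$ is then essentially smooth, hence differentiable at $0\in\intdom\theta_C$; (b) writing $\theta_C=-g$ with $g=\sup_{c\in C}\big(f(c)-f(c+\cdot)\big)$ lower $C^1$, the max-rule \cite[Theorem~10.31]{Rock98} and Clarke calculus give $\overline{\partial}\theta_C(0)=\conv\{\nabla f(C)\}$; (c) differentiability at $0$ makes this set the singleton $\{\nabla\theta_C(0)\}$, forcing $\nabla f(C)$, and then $C$ by Fact~\ref{isom}, to be a singleton. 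Without (a)--(c) or a substitute, your proof of \eqref{thetafunction} is missing its essential half.

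A secondary but related flaw: you claim (twice) that the substitution $z=x+c$ ``secures exactness''. It does not. The substitution only re-parametrizes the infimum; exactness means the infimum over $c\in C$ is \emph{attained}, which requires compactness of $C$ and lower semicontinuity (this is what the paper's appeal to \cite[Proposition~1.27]{Rock98}, using that $\dom(-f^{\vee}+\iota_{-C})=-C$ is bounded, delivers; note also that the supercoercivity of $-f^{\vee}+\iota_{-C}$ comes from \textbf{A3}, not from \textbf{A2} as you state). This is not cosmetic bookkeeping: attainment is precisely what step (a) above needs in order to produce the point $\bar{c}$, so any correct proof of the hard direction cannot leave exactness unproven.
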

\begin{proof}
For every $x\in \RR^J$, we have
\begin{align*}
\big(f\Box
(-f^{\vee}+\iota_{-C})\big)(x)&=\inf_{y}\big(f(x-y)-f(-y)+\iota_{-C}(y)\big)\\
&=\inf_{-y\in C}\big(f(x-y)-f(-y)\big)\\
&=\inf_{c\in C}\big(f(x+c)-f(c)\big)\\
&=\theta_{C}(x),
\end{align*}
which verifies \eqref{neededmost} and the domain formula.
Since $\dom (-f^{\vee}+\iota_{-C})=-C$ is bounded,
\cite[Proposition~1.27]{Rock98} implies that
$f\Box (-f^{\vee}+\iota_{-C})$ is proper and lower semicontinuous, and
that the infimal convolution is exact at every point in its domain.
Using \eqref{newconjugate} and \cite[Theorem~11.23(a)]{Rock98}, we obtain
\begin{equation}
\bfD{C}\circ\nabla f^*=f^*+(-f^{\vee}+\iota_{-C})^* =
\big(f\Box (-f^{\vee}+\iota_{-C})\big)^*.
\end{equation}
This and
\eqref{neededmost} yield \eqref{isarel}.

It remains to prove \eqref{thetafunction}. The implication ``$\Leftarrow$''
is clear. We now tackle ``$\Rightarrow$''.
Since $U-C \subseteq \dom f - C =\dom \theta_C$ and since $C\subset U$,
we have $0\in\intdom\theta_C$. Take $x\in\dom\partial\theta_C$ and
$x^*\in\partial \theta_C(x)$. Then
\begin{equation}
(\forall y\in\RR^J)\quad
\scal{x^*}{y-x} \leq \theta_C(y)-\theta_C(x).
\end{equation}
On the other hand, there exists $\bar{c}\in C$ such that
$\theta_C(x) = f(x+\bar{c})-f(\bar{c})$ and also
$(\forall y\in \RR^J)$ $\theta_C(y)\leq f(y+\bar{c})-f(\bar{c})$.
Altogether,
\begin{equation}
(\forall y\in\RR^J)\quad
\scal{x^*}{y-x} \leq f(y+\bar{c})-f(x+\bar{c}),
\end{equation}
and this implies $x^*\in \big(\partial f(\cdot + \bar{c})\big)(x)$.
Since $f$ is essentially smooth, it follows that $\partial \theta_C(x)$ is
a singleton.
In view of \cite[Theorem~26.1]{Rock70},
$\theta_{C}$ is essentially smooth, and thus
differentiable on $\intdom \theta_{C}$.
Because $0\in\intdom \theta_C$,
$\theta_C$ is locally Lipschitz and differentiable at every point
in an open neighbourhood $V$ of $0$. Now set
\begin{equation}
g\colon \RR^J\to\left[-\infty,\pinf\right[
\colon x\mapsto \sup_{c\in C}\big( f(c)-f(c+x)\big).
\end{equation}
Then $\theta_C=-g$, $g$ is lower $C^1$
(see  \cite[Definition~10.29]{Rock98}), and
$$C=\menge{c\in C}{g(0)=0=f(c)-f(c)}.$$
By \cite[Theorem~{10.31}]{Rock98},
$\overline{\partial} g(0)=\conv\menge{-\nabla f(c)}{c\in C}=
-\conv \{\nabla f(C)\}$.
As $g$ is locally Lipschitz on $V$,
\cite[Theorem~2.3.1]{Frank} now yields
$$\overline{\partial}(-g)(0)=-\overline{\partial} g(0)=\conv\big\{\nabla
f(C)\big\}.$$
Using finally that $\theta_{C}=-g$ is convex,
and that $\partial =\overline{\partial}$ for convex functions,
\cite[Proposition~2.2.7]{Frank}, we obtain
$$\nabla\theta_{C}(0)=\partial \theta_{C}(0)=\partial (-g)(0)
=\overline{\partial}(-g)(0)=
\conv\big\{\nabla f(C)\big\},$$
i.e., $\conv\{\nabla f(C)\}=\nabla \theta_{C}(0).$
Therefore, $\nabla f(C)$ is a singleton, and so is $C$ by
Fact~\ref{isom}.
\end{proof}

\begin{remark}
If $f=\thalb\|\cdot\|^2$, then
\begin{align*}
\theta_{C}(x)&=\inf_{c\in C}\big(\thalb\|x+c\|^2-\thalb\|c\|^2\big)\\
&=\inf_{c\in C}\big(\thalb\|x\|^2+\scal{x}{c}\big)\\
&=\thalb\|x\|^2-\sup\scal{-C}{x}
\end{align*}
is the function introduced by Hiriart-Urruty
in \cite[Definition~4.1]{urruty2}.
Thus, the equivalence \eqref{thetafunction} extends
\cite[Proposition~4.2]{urruty2}.
\end{remark}

\section{Right Bregman Farthest-Point Maps}\label{finalright}

In this section, we relax our assumptions on $f$, i.e.,
we will only assume \textbf{A1} and \textbf{A3}.
It will be important to emphasis the dependence on $f$
for the Bregman distance and for the (left and right) Bregman
farthest-point map; consequently, we will write
$D_{f}$, $\bfproj{C}^{f}$, $\ffproj{C}^{f}$, and similarly for $f^*$.
While $D_f$ is generally not convex in its right (second) argument ---
which makes the theory asymmetric ---
it turns out that  $\ffproj{C}^{f}$ can be studied via
$\bfproj{\nabla f(C)}^{f^*}$.

\begin{proposition}\label{different}
Suppose that $f$ and $C$ satisfy
{\rm \textbf{A1}} and {\rm \textbf{A3}}.
Then
\begin{equation} \label{samstag}
\ffproj{C}^{f}=\nabla f^*\circ\bfproj{\nabla f(C)}^{f^*}\circ \nabla f
\quad\text{and}\quad
\bfproj{\nabla f(C)}^{f^*}=\nabla f\circ \ffproj{C}^{f}\circ\nabla f^*.
\end{equation}
\end{proposition}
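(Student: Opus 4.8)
The plan is to reduce the right farthest-point map for $f$ to the \emph{left} farthest-point map for the Legendre conjugate $f^*$, via the duality identity relating $D_f$ and $D_{f^*}$. First I would record the key identity
\begin{equation*}
(\forall a\in U)(\forall b\in U)\quad D_f(a,b)=D_{f^*}\big(\nabla f(b),\nabla f(a)\big),
\end{equation*}
which follows by expanding the right-hand side and using the Fenchel equalities $f^*(\nabla f(a))=\langle\nabla f(a),a\rangle-f(a)$ and $\nabla f^*(\nabla f(a))=a$ from Fact~\ref{isom}: after cancellation the right-hand side collapses to $f(a)-f(b)-\langle\nabla f(b),a-b\rangle=D_f(a,b)$. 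Only \textbf{A1} is used, so the identity is legitimate under the relaxed hypotheses of this section.

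Next I would insert this identity into the definition of the right map. By analogy with \eqref{e:montag:b}, the right map is $\ffproj{C}^{f}(y)=\argmax_{c\in C}D_f(y,c)$ for $y\in U$, and the identity rewrites the objective as $D_f(y,c)=D_{f^*}(\nabla f(c),\nabla f(y))$. Putting $d:=\nabla f(c)$ and using that $c\mapsto\nabla f(c)$ is a homeomorphism of $C$ onto $\nabla f(C)$ with inverse $\nabla f^*$ (Corollary~\ref{c:isom}), the objective becomes $d\mapsto D_{f^*}(d,\nabla f(y))$ with $d$ ranging over $\nabla f(C)$. Since this reparametrisation is bijective, $c\in C$ maximizes $c\mapsto D_f(y,c)$ over $C$ if and only if $\nabla f(c)$ maximizes $D_{f^*}(\cdot,\nabla f(y))$ over $\nabla f(C)$; hence
\begin{equation*}
\ffproj{C}^{f}(y)=\nabla f^*\Big(\argmax_{d\in\nabla f(C)}D_{f^*}\big(d,\nabla f(y)\big)\Big)=\nabla f^*\big(\bfproj{\nabla f(C)}^{f^*}(\nabla f(y))\big),
\end{equation*}
which is exactly the first formula in \eqref{samstag}. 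At this point I would also check that the right-hand object is well defined: by Fact~\ref{isom}, $\nabla f(C)\subset\intdom f^*$, and $\nabla f(C)$ is compact as the continuous image of a compact set, so $(f^*,\nabla f(C))$ satisfies \textbf{A1} and \textbf{A3} and $\bfproj{\nabla f(C)}^{f^*}$ is nonempty-valued there.

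Finally, the second formula in \eqref{samstag} follows formally: since $\nabla f$ and $\nabla f^*$ are mutually inverse bijections (Corollary~\ref{c:isom}), pre-composing the first identity with $\nabla f^*$ and post-composing with $\nabla f$ yields $\nabla f\circ\ffproj{C}^{f}\circ\nabla f^*=\bfproj{\nabla f(C)}^{f^*}$. I expect the only genuine obstacle to be bookkeeping: pinning down the precise definition of the right map (it is only declared ``analogous'' in the excerpt) and arguing that the substitution $c\leftrightarrow\nabla f(c)$ transports the $\argmax$ faithfully. The latter is a routine consequence of injectivity, but it must be phrased at the level of \emph{sets}, since both farthest-point maps are set-valued.
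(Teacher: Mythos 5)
Your proposal is correct and follows essentially the same route as the paper: both rest on the duality identity $D_{f^*}(x^*,y^*)=D_f\big(\nabla f^*(y^*),\nabla f^*(x^*)\big)$ and then transport the $\argmax$ through the bijection $\nabla f\colon U\to\intdom f^*$ of Fact~\ref{isom}, obtaining one formula in \eqref{samstag} and deducing the other by composing with the inverse gradient maps. The only cosmetic differences are that you verify the duality identity directly from the Fenchel equalities where the paper cites \cite[Theorem~3.7(v)]{Baus97}, and that you derive the first identity before the second rather than the reverse.
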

\begin{proof}
Applying \cite[Theorem~3.7(v)]{Baus97} to $f^*$, we see that
\begin{equation*}
(\forall x^*\intdom f^*)(\forall y^*\in \intdom f^*)\quad
D_{f^*}(x^*,y^*)=D_{f}\big(\nabla f^*(y^*),\nabla f^*(x^*)\big).
\end{equation*}
Hence for every $y^*\in\intdom f^*$, we obtain
\begin{align*}
\bfproj{\nabla f(C)}^{f^*}(y^*) & ={\argmax_{x^*\in \nabla f(C)}
D_{f^*}(x^*,y^*)}\\
&= \argmax_{x^*\in \nabla f(C)} D_{f}\big(\nabla f^*(y^*),\nabla
f^*(x^*)\big)\\
& =\nabla f \Big(\ffproj{\nabla f^*\big(\nabla f(C)\big)}^{f}\big(\nabla f^*(y^*)\big)\Big)\\
&=\nabla f\Big(\ffproj{C}^{f}\big(\nabla f^*(y^*)\big)\Big),
\end{align*}
and this is the right identity in \eqref{samstag}; the left one
now follows Fact~\ref{isom}.
\end{proof}

\begin{theorem}\label{farpart}
Suppose that $f$ and $C$ satisfy {\rm \textbf{A1}} and {\rm \textbf{A3}}, that
$\dom f = \RR^J$, and that
$C$ is \fDK, i.e.,
for every $y\in\RR^J$,
$\ffproj{C}^{f}(y)$ is a singleton.
Then $C$ is a singleton.
\end{theorem}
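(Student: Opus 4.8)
The plan is to reduce this right-Bregman statement to the already-proved left-Bregman result (the theorem asserting that \bDK\ sets are singletons) by passing to the Fenchel conjugate $f^*$ together with the transformed set $\nabla f(C)$, exploiting exactly the duality recorded in Proposition~\ref{different}.

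First I would record the structural facts I get for free. Since $f$ is of Legendre type, Fact~\ref{isom} tells me that $f^*$ is likewise of Legendre type and that $\nabla f\colon U\to\intdom f^*$ is a homeomorphism with inverse $\nabla f^*$. I would then use the second identity in \eqref{samstag}, namely $\bfproj{\nabla f(C)}^{f^*}=\nabla f\circ\ffproj{C}^{f}\circ\nabla f^*$, to transport the Klee property. Concretely, for an arbitrary $y^*\in\intdom f^*$ the point $\nabla f^*(y^*)$ lies in $U\subseteq\RR^J$, so the hypothesis that $C$ is \fDK\ forces $\ffproj{C}^{f}(\nabla f^*(y^*))$ to be a singleton; applying the single-valued map $\nabla f$ then shows $\bfproj{\nabla f(C)}^{f^*}(y^*)$ is a (nonempty) singleton. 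In other words, $\nabla f(C)$ is Klee with respect to the left Bregman distance induced by $f^*$.

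Next I would verify that the pair $(f^*,\nabla f(C))$ satisfies the standing assumptions \textbf{A1}--\textbf{A3}, so that the left-version theorem applies to it verbatim. Assumption \textbf{A1} is immediate from Fact~\ref{isom}. For \textbf{A3}, I would note that $C$ is compact by the ambient \textbf{A3} and $\nabla f$ is continuous, hence $\nabla f(C)$ is a compact subset of $\intdom f^*$. The crucial point --- and the single place where the extra hypothesis $\dom f=\RR^J$ is used --- is \textbf{A2}: a function of Legendre type is closed convex proper, so $f^{**}=f$ and therefore $\dom f^{**}=\dom f=\RR^J$; by the equivalence stated in \textbf{A2} this is precisely the assertion that $f^*$ is $1$-coercive.

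With \textbf{A1}--\textbf{A3} confirmed for $(f^*,\nabla f(C))$, I would invoke the theorem that \bDK\ sets are singletons to conclude that $\nabla f(C)$ is a singleton; since $\nabla f$ is a bijection with inverse $\nabla f^*$, it follows that $C=\nabla f^*(\nabla f(C))$ is a singleton as well. I do not expect any single estimate to be the obstacle here; rather, the delicate part is making sure the conjugate pair genuinely inherits all of \textbf{A1}--\textbf{A3}. In particular, the relaxation in Section~\ref{finalright} from \textbf{A2} on $f$ to supercoercivity of $f^*$ is exactly underwritten by the added assumption $\dom f=\RR^J$, and this is the hypothesis one must be careful not to lose track of.
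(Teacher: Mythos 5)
Your proposal is correct and follows essentially the same route as the paper: transport the Klee property to $\nabla f(C)$ via the identity \eqref{samstag} from Proposition~\ref{different}, check that the conjugate pair $(f^*,\nabla f(C))$ satisfies \textbf{A1}--\textbf{A3} (with $\dom f=\RR^J$ giving supercoercivity of $f^*$, exactly as you note), and then apply the left-Bregman singleton result together with the bijectivity of $\nabla f$ from Fact~\ref{isom}. Your write-up is in fact slightly more explicit than the paper's, which compresses the verification of the standing assumptions into a single sentence.
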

\begin{proof}
Since $C$ is compact and
$\nabla f:\RR^J\to\intdom f^*$ is an isomorphism (see Fact~\ref{isom}),
we deduce that $\nabla f(C)$ is a compact subset of $\intdom f^*$.
Furthermore, by \eqref{samstag},
$\nabla f(C)$ is \bDK\ with respect to $f^*$.
Since $f^*$ satisfies \textbf{A1}--\textbf{A3}, we apply
Theorem~\ref{great} and conclude that $\nabla f(C)$ is a singleton.
Finally, again using Fact~\ref{isom},
we see that $C$ is a singleton.
\end{proof}

\begin{remark}
We do not know whether
Theorem~\ref{farpart} is true if the full-domain assumption on $f$ is
dropped.
\end{remark}

\section*{Acknowledgments}
Heinz Bauschke was partially supported by the Natural Sciences and
Engineering Research Council of Canada and by the Canada Research Chair
Program.
Xianfu Wang was partially
supported by the Natural Sciences and Engineering Research Council
of Canada.
Jane Ye was partially
supported by the Natural Sciences and Engineering Research Council
of Canada.
Xiaoming Yuan was partially supported by the Pacific Institute for the
Mathematical Sciences, by the University of Victoria,
by the University of British Columbia Okanagan, and
by the National Science Foundation of China Grant~10701055.

\small

\end{document}